\newtheorem*{lemma}{Lemma}
\newtheorem*{proposition}{Proposition}
\newtheorem*{theorem}{Theorem}
\newtheorem*{corollary}{Corollary}
\newtheorem*{ypoth}{Hypothesis}
\theoremstyle{remark}
\newtheorem*{remark}{Remark}
\newcommand{\irr}[1]{\textsf{Irrep}(#1)}
\newcommand{\ff}{\footnote}
\DeclareMathOperator{\Hom}{Hom}
 \DeclareMathOperator{\id}{Id}
 \DeclareMathOperator{\md}{mod}
\newcommand{\Z}{\mathbb{Z}}
\newcommand{\C}{\mathbb{C}}
\DeclareMathOperator{\hr}{\mathfrak{h}^{\text{reg}}}
\title{Cell modules and canonical basic sets for Hecke algebras from Cherednik algebras}
\dedicatory{To Ken Goodearl, on his 65th birthday}
\author{Maria Chlouveraki}
\address{ School of Mathematics and Maxwell Institute of Mathematics, University of Edinburgh, Edinburgh, EH9 3JZ (M.C. \& I.G.); Instituto de Matem\'atica y F\'isica, Universidad de Talca, 
Talca, Chile (S.G)} \email{maria.chlouveraki@ed.ac.uk, igordon@ed.ac.uk, sgriffeth@ed.ac.uk}
\author{Iain Gordon}
\author{Stephen Griffeth}
\thanks{We would like to thank C\'edric Bonnaf\'e, Nicolas Jacon and Rapha\"el Rouquier for helpful conversations. The first author is grateful for the financial support of EPSRC grant EP/G04984X and the second and third authors are grateful for the financial support of EPSRC grant EP/G007632; the second author warmly acknowledges the hospitality of the Hausdorff Institute for Mathematics.}
\begin{document}
\maketitle
\section{Introduction}
\subsection{} One parameter Iwahori-Hecke algebras of finite Coxeter groups have Kazhdan-Lusztig bases of geometric origin; the same is predicted for unequal parameters, and wished for complex reflection groups. These bases are fundamental in Lie theory and play a significant role in the representation theory of Iwahori-Hecke algebras. 

\subsection{} In this note we are interested in labelling the irreducible representations of non-semisimple specialisations of Hecke algebras of complex reflection groups. We will use category $\mathcal{O}$ for the rational Cherednik algebra and the ${\sf KZ}$ functor together with elementary algebraic and combinatorial arguments to construct {\it canonical basic sets} in many cases -- see \S \ref{Basic Sets} for the definition. For finite Coxeter groups our observations can be stated as follows. (Similar statements hold for complex reflection groups of type $G(\ell ,1,n)$.)

\begin{theorem}
Let $W$ be a finite Coxeter group and $\mathcal{H}(W)$ be the corresponding Hecke algebra with unequal parameters specialised in $\C$. Let ${\sf KZ} : \mathcal{O}(W) \to \mathcal{H}(W)$ denote the ${\sf KZ}$ functor, $L(E)$ and $\Delta(E)$ the irreducible and standard representations in $\mathcal{O}(W)$. Then $\{ {\sf KZ} (L(E)): E\in \irr{W}, {\sf KZ}(L(E))\neq 0 \}$ is a canonical basic set for $\mathcal{H}(W)$ and there is a symmetric bilinear form on ${\sf KZ}(\Delta(E))$ which is zero or factors through ${\sf KZ}(L(E))$.
\end{theorem}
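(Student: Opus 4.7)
The plan is to combine the fact that $\mathsf{KZ}$ is an exact quotient functor with the highest weight structure of category $\mathcal{O}$ and with the existence of a canonical contravariant form on standards.

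First I would appeal to the theorem of Ginzburg--Guay--Opdam--Rouquier that $\mathsf{KZ}$ identifies $\mathcal{H}(W)\text{-mod}$ with a Serre quotient $\mathcal{O}(W)/\mathcal{O}_{\mathrm{tor}}(W)$, where $\mathcal{O}_{\mathrm{tor}}(W)$ is the subcategory of modules supported on the discriminant of $W$. Each simple $L(E)$ then maps either to $0$ or to a simple $\mathcal{H}(W)$-module, and the set $B := \{E \in \irr{W} : \mathsf{KZ}(L(E)) \neq 0\}$ is in bijection with $\irr{\mathcal{H}(W)}$ via $E \mapsto \mathsf{KZ}(L(E))$. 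This gives the desired labelling.

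Next, because $\mathsf{KZ}$ comes from a flat deformation in the parameter $\mathbf{c}$ and is exact, $\mathsf{KZ}(\Delta(E))$ is the specialisation of the corresponding generic simple $\mathcal{H}(W)$-module; these are the cell-like modules relevant for a canonical basic set. Exactness of $\mathsf{KZ}$ gives
\[
[\mathsf{KZ}(\Delta(E)):\mathsf{KZ}(L(F))] \;=\; [\Delta(E):L(F)] \qquad (E\in\irr{W},\ F\in B).
\]
The highest-weight structure of $\mathcal{O}(W)$ supplies a $c$-function partial order $\leq_{c}$ on $\irr{W}$ with $[\Delta(E):L(F)]\neq 0 \Rightarrow F \leq_{c} E$, and $F=E$ has multiplicity one. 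This is precisely the unitriangularity required for $B$ to be a canonical basic set, establishing the first claim.

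For the bilinear form, each $\Delta(E)$ carries a canonical (up to scalar) contravariant form $\beta_E$ for a suitable anti-involution on $\hc$, whose radical is the unique maximal proper submodule $J(E) \subseteq \Delta(E)$, with $\Delta(E)/J(E) = L(E)$. Applying the exact functor $\mathsf{KZ}$ yields a symmetric form on $\mathsf{KZ}(\Delta(E))$: if $\mathsf{KZ}(L(E))=0$ then $\mathsf{KZ}(J(E))=\mathsf{KZ}(\Delta(E))$ and the induced form vanishes; otherwise it descends to a (nondegenerate) form on $\mathsf{KZ}(L(E))$, giving the second claim.

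The main obstacle is reconciling the $c$-function order on the Cherednik side with the order (typically built from Lusztig's $a$-function, or its unequal-parameter variant) that enters the definition of a canonical basic set for $\mathcal{H}(W)$, and verifying that $\mathsf{KZ}(\Delta(E))$ agrees with the cell module figuring in that definition. Both rest on an explicit dictionary between Cherednik parameters $\mathbf{c}$ and Hecke parameters, which is standard but must be checked case by case for finite Coxeter groups with unequal parameters.
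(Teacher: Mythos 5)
Your first two steps reproduce the paper's Proposition~\ref{can bas set}: exactness of ${\sf KZ}$ plus \cite[Theorem 5.14]{GGOR} gives the bijection between ${\bf B}$ and $\irr{\mathcal{H}_\theta}$, and the highest weight structure gives $[\Delta_\psi(F):L_\psi(E)]\neq 0\Rightarrow E\leq_\psi F$ with multiplicity one on the diagonal. But at that point you have only produced a basic set with respect to the $c$-function, and declaring that this "establishes the first claim" is premature: a \emph{canonical} basic set in the sense of Geck--Rouquier is defined with respect to the $a$-function, and $c_E<c_F$ does not formally imply $a_E<a_F$. You acknowledge this in your closing paragraph but dismiss it as "standard but must be checked case by case"; in fact this reconciliation is the substantive content of the paper. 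The identity $a_E+A_E=c_E+\sum m_{H,j}$, obtained from the action of the central element $\boldsymbol{\pi}$ of the pure braid group, only converts the $c$-order into the $(a+A)$-order; to descend to the $a$-order the paper either invokes compatibility of both $a$ and $A$ with two-sided cells (which requires Lusztig's conjectures (P1)--(P15), not known for general unequal parameters in type $B$), or, for $G(\ell,1,n)$ and hence type $B$, supplies an independent combinatorial argument (Lemma~\ref{dominance}, the proposition on node orderings, and Theorem~\ref{DGGJ}). Omitting this step omits the theorem for exactly the cases where it is new.

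The construction of the form has a second genuine gap: one cannot "apply the exact functor ${\sf KZ}$" to a bilinear form. The contravariant form on $\Delta_\psi(E)$ is encoded by the canonical map $\Delta_\psi(E)\rightarrow\nabla_\psi(E)$; applying ${\sf KZ}_\psi$ yields a map ${\sf KZ}_\psi(\Delta_\psi(E))\rightarrow{\sf KZ}_\psi(\nabla_\psi(E))$, and to interpret this as a $\sigma$-contravariant bilinear form on $S_q(E)$ one must first prove that ${\sf KZ}_\psi(\nabla_\psi(E))$ is isomorphic to the $\sigma$-twisted linear dual of $S_q(E)$. Establishing this is the bulk of the paper's proof of Theorem~\ref{forms}: the $\Delta$- and $\nabla$-filtrations of ${\mathcal H}_\Theta={\sf KZ}_\Psi(P_{{\sf KZ},\Psi})$ over $\hat R$ are both identified with the filtration by $c$-graded isotypic components after extending scalars to $Q$, using the symmetrizing form $t$ to realise the dual of ${\mathcal H}_\Theta$ as ${\mathcal H}_\Theta$, and the hypothesis of \ref{assumfornow} (existence of $\sigma$ and a nondegenerate $\sigma$-contravariant form on the generic simple $E^\dagger$, so that $E^{\dagger*}\cong E^\dagger$) is needed to conclude that the dual of ${\sf KZ}_\Psi(\nabla_\Psi(E))$ is ${\sf KZ}_\Psi(\Delta_\Psi(E))$ rather than some other standard. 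The symmetry of the resulting form is also not automatic and is proved by lifting to $Q$ and comparing with the form on $E^\dagger$. Without these steps your form on ${\sf KZ}(\Delta(E))$ is not constructed, even though your description of its radical (zero or the kernel of the map onto ${\sf KZ}(L(E))$) would be correct once it is.
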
 

\subsection{} Regarding the first claim above, the existence of canonical basic sets for finite Coxeter groups has been known for some time thanks to work of Jacon and others, \cite{GeJa}, \cite{jaca}. Existing proofs in type $B$, however, use Ariki's theorem on the categorification of Hecke algebra representations; our proof does not. If we use the earlier work we get an explicit combinatorial description of the irreducible representations in $\mathcal{O}(W)$ killed by ${\sf KZ}$; this appears to be new. 

\subsection{} The existence of symmetric bilinear forms on modules is also not surprising. Geck has shown that all Iwahori-Hecke algebras of finite Coxeter groups admit at least one cellular algebra structure, \cite{geck}, \cite{Geck2}. In the unequal parameter case, it is even expected that there are several different cellular structures depending on the choice of a weight function defining the Hecke algebra; Geck has proved this provided a series of conjectures of Lusztig hold. As a consequence each Hecke algebra is expected to admit a family of cell modules, depending on the choice of weight function, and these will carry a symmetric bilinear form such that the quotient by the radical of this form is either zero or irreducible. But this is precisely what ${\sf KZ}$ of the standard modules of rational Cherednik algebras do, without assumptions. We show that these modules agree with the appropriate cell modules, whenever the cell modules exist. It is worth pointing out that
  Lusztig's conjectures are not yet known to hold for type $B$ in general. They are, however, known to hold for ``dominant" choices of weight function and in this case \cite{GeIaPa} shows the cell modules are the Specht modules of \cite{DJM}. So, for a special choice of parameters in this case -- and more generally for $G(\ell , 1, n)$ -- we can identify the image of the standard modules under ${\sf KZ}$ with these Specht modules; in general they are different.

\subsection{} The paper is organised as follows. In the following section we recall the definition of Hecke algebras for complex reflection groups and category $\mathcal{O}$ for rational Cherednik algebras. In Section 3 we discuss basic sets, while in Section 4 we prove that ${\sf KZ}$ of the standard modules have symmetric bilinear forms and compare this with existing results in the finite Coxeter group case. We end by studying the $G(\ell ,1 ,n)$ case where we require combinatorial arguments to produce complete results.
\section{Hecke and Cherednik algebras}
\subsection{Notation} \label{notation}Let $W$ be a complex reflection group with reflection representation $\mathfrak{h}$. Let $\mathcal{A}$ be the set of reflecting hyperplanes in $\mathfrak{h}$. 

For $H\in \mathcal{A}$ let $W_H$ be the pointwise stabiliser of $H$ in $W$, set $e_H = |W_H|$ and 
let $U = \bigcup_{H\in \mathcal{A}/W}\irr{W_H}$. Since $W_H$ is a finite cyclic group, we may identify elements of $U$ with pairs $(H,j)$ where $0\leq j < e_H$ and the irreducible representation of $W_H$ is given by $\det^j|_{W_H}$.

Given $H\in \mathcal{A}$, choose $\alpha_H\in \mathfrak{h}^*$ with $\ker \alpha_H = H$ and let $v_H\in \mathfrak{h}$ be such that $\C v_H$ is a $W_H$-stable complement to $H$. Let $\hr = \mathfrak{h}\setminus \bigcup_{H\in \mathcal{A}} H$ and $B_W = \pi_1(\hr/W, x_0)$ where $x_0$ is some fixed basepoint.

\subsection{} For any positive integer $e$ we will write $\zeta_e$ for $\exp(2\pi \sqrt{-1}/e)\in \C$.

\subsection{Hecke algebras} Let $\{ {\bf q}_u\}$ be a set of indeterminates with $u\in U$ and set ${\bf k} = \mathbb{C}[\{ {\bf q}_{u}^{\pm 1}\}]$. 

Let $\mathcal{H}$ be the Hecke algebra of $W$ over ${\bf k}$, the quotient of ${\bf k}[B_W]$ by the relations $$\prod_{0\leq j < e_H} (T_H - \zeta_{e_H}^j{\bf q}_{H,j}) = 0, $$ where there is a relation for each $H\in \mathcal{A}$ and where $T_H$ is a  generator for the monodromy around $H$, see for instance \cite[\S 4]{BMR}. 

\begin{ypoth}
The algebra $\mathcal{H}$ is free over ${\bf k}$, of rank $|W|$. There is a symmetrising form $t: \mathcal{H} \longrightarrow {\bf k}$ that becomes the canonical symmetrising form on $ \C[W]=  \mathcal{H}\otimes_{\bf k} \C $ on specialising ${\bf q}_{H,j}$ to $1$. 
\end{ypoth}

This hypothesis is known to hold for all but finitely many irreducible complex reflection groups, and it is conjectured to be true in general \cite[\S 4C]{BMR}.

\subsection{} \label{a defn} Given any $\C$-algebra homomorphism $\Theta: {\bf k} \rightarrow k$, we will let $\mathcal{H}_{\Theta}$ denote the specialised algebra $\mathcal{H}\otimes_{\bf k} k$. 
The cyclotomic specialisation $\Theta : {\bf k} \rightarrow \C[q^{\pm 1}]$ will be important to us. For this we pick a set of integers $m=\{m_u\}$ and then send ${\bf q}_{u}$ to $q^{m_u}$, where $q$ is either an indeterminate or a non-zero complex number. We will denote this  $\mathcal{H}_{\Theta}$ by $\mathcal{H}_{q,m}$. 

\subsection{} There is a positive integer $n$ so that after adjoining an $n$th root $z$ of $q$, $\mathcal{H}_{q,m}$ becomes split semisimple.  Let $E \in \irr{W}$ and let $s_E \in \C[z^{\pm 1}]$ be the associated Schur element of $\mathcal{H}_{q,m}$, where $q$ is an indeterminate (see \cite[\S 2B]{BMM}). We set
\begin{center}
$a_E= - \mathrm{val}_q(s_E)=- \mathrm{val}_z(s_E)/n$ \,\,\,and\,\,\, $A_E= - \mathrm{deg}_q(s_E)=- \mathrm{deg}_z(s_E)/n.$
\end{center}

\subsection{Cherednik algebras}

Let $\{{\bf h}_u\}$ be a set of indeterminates with $u\in U$ and set ${\bf R} = \C [\{{\bf h}_u\}]$. Let ${\bf H}$ be the rational Cherednik algebra over ${\bf R}$ attached to $W$, see \cite[\S 5]{R} whose notation we follow. As an ${\bf R}$-algebra ${\bf H}$ has a triangular decomposition ${\bf R}[\mathfrak{h}]\otimes_{\bf R}  {\bf R}[W]\otimes_{\bf R} {\bf R}[\mathfrak{h}^*]$. The commutation relation between $y\in \mathfrak{h} \subset {\bf R}[\mathfrak{h^*}]$ and $x\in \mathfrak{h}^* \subset {\bf R}[\mathfrak{h}]$ is given by $$[y,x] = \langle y, x\rangle + \sum_{H\in\mathcal{A}} \frac{\langle y, \alpha_H\rangle\langle v_H, x\rangle}{\langle v_H,\alpha_H\rangle}\gamma_H$$ where $$\gamma_H = \sum_{w\in W_H\setminus\{1\}} \left(\sum_{j=0}^{e_H-1} \det(w)^{-j} ({\bf h}_{H,j} - {\bf h}_{H,j-1})\right) w.$$

Again, given any $\Psi: {\bf R}\rightarrow R$, we define ${\bf H}_{\Psi} = {\bf H}\otimes_{\bf R} R$.

\subsection{Category $\mathcal{O}$ and the ${\sf KZ}$ functor} Let $\Psi : {\bf R}\rightarrow R$ with $R$ a local commutative noetherian algebra with residue field $K$, and let $\psi: {\bf R} \rightarrow K$ be the extension of $\Psi$ to $K$. 
Given $E\in \irr{W}$, set ${c}_E\in K$ to be the scalar by which the element $-\sum_{H \in \mathcal{A}} \sum_{j=0}^{e_{H}-1} \left( \sum_{w \in W_H} (\det w)^{-j}w \right) \psi({\bf h}_{H,j})\in Z(K[W])$ acts on $E\otimes_{\C} K$\ff{In the rational Cherednik algebra literature, including \cite{R}, the function $c_E$ is usually taken to be the negative of the $c_E$ here; but in the context of this paper the above definition is more natural.}.

\subsection{} Set $\mathcal{O}_{\Psi}$ to be the category of finitely generated ${\bf H}_{\Psi}$-representations that are locally nilpotent for the action of $\mathfrak{h}\subset R[\mathfrak{h}^*]$. This is a highest weight category, \cite{GGOR} and \cite[\S 5.1]{R}. Its standard objects are $\Delta_{\Psi}(E) = {\bf H}_{\Psi}\otimes_{R[\mathfrak{h}^*]\rtimes W} (R\otimes_{\C} E)$ where $E\in \irr{W}$, and its ordering is defined by $$\Delta_{\Psi}(E) < \Delta_{\Psi}(F) \text{ if and only if } {c}_F - {c}_E \in \mathbb{Z}_{>0}.$$
Henceforth we will write $E<_{\Psi} F$ if $c_F - c_E \in \mathbb{Z}_{>0}$.

\subsection{} \label{KZappears} Let $\hat{R}$ be the completion of $\C[\{{\bf h}_u\}]$ at a maximal ideal corresponding to the point $\{ h_u \}\in \C^U$. Consider $\hat{R}$ as a ${\bf k}$-algebra via the homomorphism that sends ${\bf q}_{H,j}$ to $\exp(2\pi \sqrt{-1} {\bf h}_{H,j})$. Thus for any homomorphism $\Psi : {\bf R} \rightarrow R$ which factors through $\hat{R}$ there is a corresponding homomorphism $\Theta : {\bf k}\rightarrow R$. Then there is an exact functor $${\sf KZ}_{\Psi}: \mathcal{O}_{\Psi} \rightarrow \mathcal{H}_{\Theta}-\md$$

\section{Category $\mathcal{O}$ and basic sets}

\subsection{} Let $\{h_u\}\in \C^U$ and let $q_u= \exp(2\pi \sqrt{-1} h_u)$ for each $u\in U$. Let $\psi: {\bf R}\rightarrow \C$ and $\theta: {\bf k}\rightarrow \C$ be the corresponding specialisation maps. In this case each $\Delta_{\psi}(E)$ has an irreducible head which we write as $L_{\psi}(E)$. We define two sets of $\mathcal{H}_\theta$-representations 
\begin{equation} \label{cell and simple} S_q(E)={\sf KZ}_{\psi}(\Delta_{\psi}(E)) \textrm{ and } D_q(E)={\sf KZ}_{\psi}(L_{\psi}(E)).\end{equation}
Set ${\bf B} \subseteq \irr{W}$ to be the $E\in\irr{W}$ such that $D_{q}(E) \neq 0$.
For $E \in {\bf B}$, set $$\tilde{c}_E:=\mathrm{min}\{c_F \,:\,F \in \irr{W} \textrm{ such that } [S_q(F):D_q(E)] \neq 0\}.$$

\begin{proposition}\label{can bas set} 
 \begin{enumerate}[(a)] \item The set
$\{D_q(E):E \in {\bf B} \}$ is a complete set of pairwise non-isomorphic irreducible $\mathcal{H}_{\theta}$-representations.
\item If $E\in {\bf B}$, then
$[S_q(E):D_q(E)] =1$.
\item For $E\in {\bf B}$, we have $c_{E}=\tilde{c}_E$;
\item  If $[S_q(F):D_q(E)] \neq 0$ for some $F \in \mathrm{Irr}(W)$ and $E\in {\bf B}$, then either 
$F=E$ or $\tilde{c}_E<c_F$.
\end{enumerate}

\end{proposition}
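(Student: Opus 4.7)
The decisive input is that ${\sf KZ}_\psi$ is an exact quotient functor from $\mathcal{O}_\psi$ onto the category of finite-dimensional $\mathcal{H}_\theta$-modules; this is (a mild generalisation of) the main result of \cite{GGOR}, in the form elaborated by Rouquier in \cite[\S 5]{R}. Since ${\sf KZ}_\psi$ is a Serre quotient, it kills precisely the Serre subcategory generated by those $L_\psi(E)$ with ${\sf KZ}_\psi(L_\psi(E))=0$, and it induces a bijection between the remaining isomorphism classes $\{L_\psi(E) : E\in {\bf B}\}$ and the isomorphism classes of simple $\mathcal{H}_\theta$-modules. This is exactly part (a).

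For parts (b), (c) and (d) I would exploit the highest weight structure of $\mathcal{O}_\psi$. Recall that $\Delta_\psi(F)$ has $L_\psi(F)$ as its simple head, appearing with multiplicity one, while every other composition factor is some $L_\psi(E)$ with $E<_\psi F$, i.e.\ $c_F-c_E\in\mathbb{Z}_{>0}$. Because ${\sf KZ}_\psi$ is exact, any composition series of $\Delta_\psi(F)$ yields a filtration of $S_q(F)$ whose successive subquotients are the $D_q(E')$ (some possibly zero). Comparing multiplicities and using part (a) to conclude that $D_q(E)\cong D_q(E')$ with $E\in{\bf B}$ forces $E'=E$, one reads off
\[
[S_q(F):D_q(E)] \;=\; [\Delta_\psi(F):L_\psi(E)] \qquad \text{for } E\in {\bf B}.
\]
Specialising to $F=E$ gives $[S_q(E):D_q(E)]=1$, which is part (b). For general $F$, the right-hand side can only be nonzero when $F=E$ or $E<_\psi F$, and the latter forces $c_E<c_F$.

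Consequently, for $E\in {\bf B}$ the set $\{c_F : F\in\irr{W},\ [S_q(F):D_q(E)]\neq 0\}$ contains $c_E$ by (b), and every other element of that set is strictly greater than $c_E$. This immediately yields $\tilde{c}_E=c_E$, which is part (c), and whenever $F\neq E$ with $[S_q(F):D_q(E)]\neq 0$ it gives $c_F>c_E=\tilde{c}_E$, which is part (d). The only substantive step in the whole argument is part (a); the real content sits in the Serre-quotient property of ${\sf KZ}_\psi$, and this is where the main difficulty would lie if one had to reprove everything from scratch.
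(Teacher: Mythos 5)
Your proposal is correct and follows essentially the same route as the paper: part (a) is the quotient-functor statement of \cite[Theorem 5.14]{GGOR}, and parts (b)--(d) follow from exactness of ${\sf KZ}_\psi$ giving $[S_q(F):D_q(E)]=[\Delta_\psi(F):L_\psi(E)]$ for $E\in{\bf B}$, combined with the highest weight structure of $\mathcal{O}_\psi$. No gaps.
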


\begin{proof} Part (a) follows from \cite[Theorem 5.14]{GGOR}. Then since the functor ${\sf KZ}_{\Psi}$ is exact, we have
$$
[S_q(F): D_q(E)]
=
[\Delta_{\psi}(F):L_{\psi}(E)] \quad \textrm{if} \quad {\sf KZ}_\psi(L_{\psi}(E)) \neq 0.
$$
Since $\mathcal{O}_{\psi}$ is a highest weight category, the composition series of $\Delta_{\psi}(F)$ consists of $L_{\psi}(E)$'s with $E \leq_{\psi} F$ and we have $[\Delta_{\psi}(E):L_{\psi}(E)]=1$. So (b) holds. 
Now, let $E\in {\bf B}$ and 
$
\mathcal{S}_E=\{F \in \irr{W} \,:\,[S_q(F):D_q(E)] \neq 0\}
=
\{F \in \irr{W} \,:\, [\Delta_{\psi}(F):L_{\psi}(E)] \neq 0\}.
$
We have $E \in \mathcal{S}_E$. If $F \in \mathcal{S}_E$ with $F\neq E$, then 
$E <_\psi F$, whence $c_E < c_F$. Therefore, $c_E=\tilde{c}_E$ and (c) and (d) hold.
\end{proof}
\subsection{Basic Sets} \label{Basic Sets}In the situation of the above proposition, we say that ${\bf B}$ is a \emph{basic set with respect to} $c$. More generally, if we have another function $f: \irr{W} \longrightarrow \C$ and a subset ${\bf B}'\subseteq \irr{W}$ that satisfy the properties of Proposition \ref{can bas set} with $f$ replacing $c$, then we say that ${\bf B}$ is a \emph{basic set with respect to} $f$. A basic set with respect to $f$ is unique. 

In the case that we let $q_u = \exp(2\pi\sqrt{-1} m_u)$ for integers $\{ m_u \}\in \Z^U$ and we use the $a$-function defined in \ref{a defn}, then, using the ${\sf KZ}$-functor to identify $\irr{W}$ with $\irr{\mathcal{H}_{\text{Quot}({\bf k})}}$, a basic set with respect to $a$ is exactly the canonical basic set in the sense of Geck-Rouquier \cite{GeRo}.

\subsection{The functions $a$, $A$ and $c$} Recall from \ref{notation} the basepoint  $x_0 \in \mathfrak{h}^{\mathrm{reg}}$, and from \ref{a defn} the cyclotomic Hecke algebra $\mathcal{H}_{q,m}$ over $\C[q^{\pm 1}]$ where $w$ is an indeterminate. Let $K = \C(q)$, so that $\mathcal{H}_{K,m} = K\otimes_{\C[q^{\pm 1}]} \mathcal{H}_{q,m}$ is split semisimple, with a bijection between $\irr{W}$ and $\irr{\mathcal{H}_{K,m}}$. Let ${\boldsymbol{\pi}}$ be the central element of the pure braid group $P_W = \pi_1(\hr, x_0)$ defined by the loop $s \mapsto \mathrm{exp}(2\pi \sqrt{-1} s)x_0$ and let $\omega_{E}(\boldsymbol{\pi})$ denote the scalar in $\C[q^{\pm 1}]$ by which $\boldsymbol{\pi}$ acts on the irreducible representations of $\mathcal{H}_{K,m}$ corresponding to $E$.  Recall $t$ denotes the canonical symmetrizing form on $\mathcal{H}_{q,m}$. By combining \cite[Proposition 2.8]{BMM} and the formulas for $\omega_{E}(\boldsymbol{\pi})$ and $t(\boldsymbol{\pi})$ in \cite[(1.22) and Theorem 2.1.2(b) respectively]{BMM} with \cite[(6.5) and Lemma 6.6]{BMM}, we have
$$\frac{\omega_{E}(\boldsymbol{\pi})}{t(\boldsymbol{\pi})}=q^{-a_E-A_E}.$$ 

Setting $h_{H,j} = m_{H,j}$ gives a specialisation map $\psi : {\bf R} \rightarrow \C$.  The formulas for $\omega_{E}(\boldsymbol{\pi})$ and $t(\boldsymbol{\pi})$ mentioned above then show that  \begin{equation} \label{a+A}
 a_E+A_E=\frac{d}{dq}\left(\frac{\omega_{E}(\pi)}{t(\pi)}\right) (1)=c_E + \sum_{H\in \mathcal{A}}\sum_{j=0}^{e_H-1} m_{H,j}.
 \end{equation}
Thus we see that orderings on $\irr{W}$ determined by the functions $c$ or $a+A$ are equal.

\section{Forms and Standard Modules}

\subsection{}Let $\{ h_{u}\} \in \mathbb{R}^U$, $\hat{R}$ the completion of ${\bf R}$ at $\{h_u\}$ and $q_u = \exp(2\pi\sqrt{-1}h_u)$ for all $u\in U$. We have homomorphisms $$ \xymatrix{ {\bf R} \ar[r]^{\Psi} \ar[dr]_{\psi} & \hat{R} \ar[d]& \text{and}  & {\bf k} \ar[r]^{\Theta} \ar[dr]_{\theta} & \hat{R} \ar[d] \\ &  \C & &  & \C.}$$ Let $Q = \text{Quot}(\hat{R})$ and $\Theta_Q: {\bf k} \rightarrow \hat{R} \rightarrow Q$ the corresponding embedding. Setting $E^{\dagger}=Q\otimes_{\hat{R}} {\sf KZ}_{\Psi}(\Delta_\Psi(E))$ gives a bijection $E \mapsto E^\dagger$ from $\irr{W}$ to $\irr{{\mathcal H}_{\Theta_Q}}$.

\subsection{} \label{assumfornow}  We will assume for the next few sections that there is a ${\bf k}$-algebra anti-involution $\sigma$ of $\mathcal{H}$.  It passes to any specialisation of $\mathcal{H}$.  We assume further that there is a non-degenerate symmetric bilinear form $(\, \,,\,): E^{\dagger}\otimes E^{\dagger}\rightarrow Q$ that satisfies $(he,e') = (e, \sigma(h)e')$ for all $e,e'\in E^{\dagger}$ and $h\in\mathcal{H}_{\theta_Q}$. 

\subsection{} We do not know how restrictive the assumptions of  \ref{assumfornow}  are. However, it is elementary that they hold for all Coxeter groups and for the groups of type $G(\ell , 1, n)$:
\begin{itemize}
\item
For $W$ a finite Coxeter group we take $\sigma (T_w) = T_{w^{-1}}$. The non-degenerate symmetric form arises, for instance, as in the first part of the proof of \cite[Proposition 4.3]{geckconstrchar}.
\item
For $W$ be a complex reflection group of type $G(\ell ,1,n)$ we take $\sigma(T_i) = T_i$ for $0\leq i \leq n-1$, where the $T_i$ are the usual generators, defined for instance in \cite[Theorem 3.6]{BMR}. This induces an anti-involution of $\mathcal{H}$ and the form may be deduced from \cite[p.398]{DJM}.\end{itemize}

\subsection{} Recall the definition of the $\mathcal{H}_{\psi}$-representations $S_q(E)$ and $D_q(E)$ given in \eqref{cell and simple}. The following theorem is motivated by \cite[Proposition 6.5]{GGOR}. 

\begin{theorem} \label{forms} Assume that $W$ satisfies the assumption of \ref{assumfornow}. Then there is a symmetric bilinear form $\langle \,\, , \, \rangle: S_q(E)\otimes S_q({E})\rightarrow \C$ which, for all $s,s' \in S_q(E)$ and $h\in \mathcal{H}_{\theta}$, satisfies $\langle h s, s' \rangle = \langle s, \sigma(h) s'\rangle$, and such that $D_q(E) \cong S_q(E)/ \text{rad}  \langle \,\, , \, \rangle$. 
\end{theorem}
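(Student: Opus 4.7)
The plan is to realize $S_q(E)$ as the mod-$t$ reduction of an $S$-lattice for $S=\C[[t]]$ corresponding to a generic one-parameter deformation of $\psi$, and to produce the form over the DVR $S$ by specialising from its (semisimple) generic fibre. Choose a ring homomorphism $\hat R \to S$ such that the composition ${\bf R} \to \hat R \to S$ corresponds to a generic one-parameter curve through the closed point of $\Spec \hat R$; since the loci where $\mathcal{O}$ and $\mathcal{H}$ fail to be split semisimple are proper closed subvarieties of parameter space, we may arrange the generic fibre $K = \mathrm{Frac}(S)$ to avoid both. Write $\Psi_S$, $\Theta_S$ for the resulting maps and $\Psi_K$, $\Theta_K$ for their extensions to $K$. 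Set $N = {\sf KZ}_{\Psi_S}(\Delta_{\Psi_S}(E))$; this is a finitely generated $\mathcal{H}_{\Theta_S}$-module, $S$-torsion-free (using $S$-freeness of $\Delta_{\Psi_S}(E) = S[\mathfrak h] \otimes E$ and the compatibility of ${\sf KZ}$ with base change), and therefore $S$-free. By base change, $N \otimes_S K \cong {\sf KZ}_{\Psi_K}(L_{\Psi_K}(E))$ is absolutely irreducible over $\mathcal{H}_{\Theta_K}$ and $N \otimes_S \C \cong S_q(E)$.

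By the split semisimplicity of $\mathcal{H}_{\Theta_K}$ together with the assumption of \ref{assumfornow} applied to the generic fibre, there is a non-degenerate symmetric $\sigma$-invariant form $B_K : N_K \otimes_K N_K \to K$, unique up to $K^\times$. The image $B_K(N \otimes_S N)$ is a nonzero finitely generated $S$-submodule of $K$, hence equals $t^{-n} S$ for some $n \in \Z$; replacing $B_K$ by $t^n B_K$ produces a form $B_S : N \otimes_S N \to S$ with image $S$. Reducing modulo $t$ yields a nonzero symmetric $\sigma$-invariant form $\langle \cdot, \cdot \rangle : S_q(E) \otimes_\C S_q(E) \to \C$.

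The form defines an $\mathcal{H}_\theta$-module map $\bar\phi : S_q(E) \to S_q(E)^{\sigma,*}$ whose image is $S_q(E)/\mathrm{rad}\langle\cdot,\cdot\rangle$. Assume first that $E \in {\bf B}$. Three inputs combine: (i) ${\sf KZ}_\psi$ is a Serre quotient functor (\cite[Theorem 5.14]{GGOR}), and since $L_\psi(E)$ is the head of $\Delta_\psi(E)$, exactness of ${\sf KZ}_\psi$ together with $D_q(E) \neq 0$ identifies $D_q(E)$ as the head of $S_q(E)$; (ii) $N_K$ being absolutely simple and $\sigma$-self-dual over $K$ forces $D_q(E)$ to be $\sigma$-self-dual and to appear as the socle of $S_q(E)^{\sigma,*}$; (iii) Proposition \ref{can bas set}(b) gives $[S_q(E):D_q(E)] = 1$. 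The nonzero image of $\bar\phi$ is therefore a quotient of $S_q(E)$ with head $D_q(E)$ and a subobject of $S_q(E)^{\sigma,*}$ with socle $D_q(E)$; multiplicity one precludes two copies of $D_q(E)$ in a single quotient, so the image is simple and equal to $D_q(E)$. Hence $S_q(E)/\mathrm{rad}\langle\cdot,\cdot\rangle \cong D_q(E)$. When $E \notin {\bf B}$ we have $D_q(E) = 0$ and take $\langle\cdot,\cdot\rangle = 0$, making the asserted isomorphism tautological. The main obstacle is the radical identification in the $E \in {\bf B}$ case: combining the head statement for $S_q(E)$, the self-$\sigma$-duality of $D_q(E)$ inherited from the generic fibre, and the multiplicity-one bound of Proposition \ref{can bas set}(b) to pin the image of $\bar\phi$ down to $D_q(E)$; a secondary technicality is the $S$-freeness of $N$ and base-change compatibility of ${\sf KZ}$ along $\hat R \to S \to \C$.
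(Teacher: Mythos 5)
Your construction of the form itself---reduce a primitive $\sigma$-invariant form on an $S$-lattice along a generically semisimple one-parameter curve---is sound and genuinely different from the paper's, but the crux of the theorem is the identification $S_q(E)/\mathrm{rad}\langle\, ,\,\rangle\cong D_q(E)$, and there your argument leans on two claims that are asserted rather than proved. First, the claim that $D_q(E)$ is the (simple) head of $S_q(E)$: exactness of ${\sf KZ}_\psi$ only yields a surjection $S_q(E)\twoheadrightarrow D_q(E)$, not that every simple quotient of $S_q(E)$ factors through it. The statement is true, but needs an argument (e.g.\ lift a surjection $S_q(E)\to D_q(F)$ along ${\sf KZ}_\psi(P(E))\to S_q(E)$ and use $\Hom_{\mathcal{O}_\psi}(P,M)\cong\Hom_{\mathcal{H}_\theta}({\sf KZ}_\psi P,{\sf KZ}_\psi M)$ for $P$ projective to force $F=E$, then invoke multiplicity one). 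Second, and more seriously, ``$N_K$ being absolutely simple and $\sigma$-self-dual forces $D_q(E)$ to be $\sigma$-self-dual'' is a non sequitur: self-duality of the generic fibre gives, via the dual lattice and Brauer--Nesbitt, only the Grothendieck-group identity $[S_q(E)^{\sigma,*}]=[S_q(E)]$, i.e.\ that $D_q(E)^{\sigma,*}$ occurs \emph{somewhere} in $S_q(E)$, not that it is the head. To get $D_q(E)^{\sigma,*}\cong D_q(E)$ you must combine this with the highest weight order: writing $D_q(E)^{\sigma,*}\cong D_q(F)$, one direction gives $[\Delta_\psi(E):L_\psi(F)]\neq 0$ and the dual relation (using the generic self-duality for $F$ as well) gives $[\Delta_\psi(F):L_\psi(E)]\neq 0$, whence $E=F$. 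Without both points, your specific form could a priori have image with head $D_q(F)$ for some $F\neq E$, and the radical identification fails for it even though the theorem is true. A smaller issue: assumption \ref{assumfornow} is stated for the full generic fibre $Q=\mathrm{Quot}(\hat R)$, and your $K$ is a specialisation of $Q$, not an extension, so the non-degenerate $\sigma$-invariant form on $N_K$ requires a separate (if routine) justification.

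For contrast, the paper's proof never needs heads, socles, self-duality of simples, or multiplicity one at this step: it establishes $\Hom_{\hat R}({\sf KZ}_{\Psi}(\nabla_\Psi(E)),\hat R)\cong{\sf KZ}_{\Psi}(\Delta_\Psi(E))$ by matching the $\Delta$- and $\nabla$-filtrations of the representing projective $P_{{\sf KZ},\Psi}$ inside $\mathcal{H}_\Theta$ via the symmetrising trace and the $c$-ordering, and then defines the form as ${\sf KZ}_\psi$ applied to the canonical map $\Delta_\psi(E)\to\nabla_\psi(E)$. Since that map has image exactly $L_\psi(E)$ and ${\sf KZ}_\psi$ is exact, the image of $\phi_E$ is exactly $D_q(E)$, uniformly in whether $D_q(E)=0$. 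Your route can be completed by supplying proofs of the two claims above, but as written it has genuine gaps at precisely the step carrying the content of the theorem.
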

\begin{proof}
The functor ${\sf KZ}_{\Psi}$ is represented by a projective object $P_{{\sf KZ},\Psi}\in \mathcal{O}_{\Psi}$, \cite[\S 5.4]{GGOR} and \cite[\S 5.2]{R}. The highest weight structure on $\mathcal{O}_{\Psi}$ gives a finite filtration of $P_{{\sf KZ},\Psi}$  $$0\subseteq \cdots \subseteq P^{i+1} \subseteq P^{i} \subseteq P^{i-1} \subseteq \cdots \subseteq P_{{\sf KZ},\Psi},$$ where each section $P^{i}/P^{i+1}$ is a direct sum of standard modules $\Delta_\Psi(E)$ for which $i\leq {c}_E < i+1$, each appearing with multiplicity $\dim (E)$, \cite[Proof of Theorem 5.15]{GGOR}. We apply ${\sf KZ}_{\Psi}$ to this to get a filtration of ${\sf KZ}_\Psi(P_{{\sf KZ}, {\Psi}}) ={\mathcal H}_{\Theta}$ whose sections are direct sums of ${\sf KZ}_{\Psi}(\Delta_\Psi(E))$ for various $E$. 

The above filtration of ${\mathcal H}_\Theta$ then induces a filtration of ${\mathcal H}_{\Theta_Q}$ whose sections are direct sums of $(E^{\dagger})^{\dim E}$ where $i\leq {c}_E < i+1$. Define another filtration on ${\mathcal H}_{\Theta_Q}$ by setting $(Q\otimes_{\bf R} {\mathcal H})^{\geq i}$ to be the sum of the $E^{\dagger}$-isotypic components of ${\mathcal H}_{\Theta_Q}$ where $c_E\geq i$. This induces a filtration ${\mathcal H}_{\Theta}^{\geq i} = {\mathcal H}_{\Theta} \cap (Q\otimes_{\bf R} {\mathcal H})^{\geq i}$ of ${\mathcal H}_{\Theta}$. 

We claim this equals the above filtration by ${\sf KZ}_{\Psi}(P^i)$'s. By construction $Q\otimes_{\hat R} {\sf KZ}_{\Psi}(P^i) = (Q\otimes_{\bf R} {\mathcal H})^{\geq i}$ so we deduce for each $i$ that ${\sf KZ}_{\Psi}(P^i)\subseteq {\mathcal H}_{\Theta}^{\geq i}$ and the quotient is torsion. But ${\mathcal H}_{\Theta}^{\geq i}/{\sf KZ}_{\Psi}(P^i) \subseteq {\mathcal H}_{\Theta}/{\sf KZ}_{\Psi}(P^i)$ and this is a free $\hat{R}$-module since it has a filtration by ${\sf KZ}_{\Psi}(\Delta_{\Psi}(E))$'s, each of which is free over $\hat{R}$. Thus there is no torsion, and we have the claimed equality.

By \cite[Proposition 5.21]{GGOR} and \cite[Proposition 4.30]{R} there is a $\nabla$-filtration of $P_{{\sf KZ}, \Psi}$  $$0\subseteq \cdots \subseteq P_{j-1}\subseteq P_j \subseteq P_{j+1}\subseteq \cdots \subseteq P_{{\sf KZ}, \Psi},$$ where each section $P_{j+1}/P_{j}$ is a direct sum of $\nabla_{\Psi}(E)$'s with $j\leq {c}_{E} < j+1$. Let $K_j = \Hom_{\hat{R}}({\sf KZ}_{\Psi}(P_{{\sf KZ}, \Psi}/P_j), \hat{R})$ so that we have a filtration of right ${\mathcal H}_{\Theta}$-modules $$0 \subseteq \cdots \subseteq K_{j+1}\subseteq K_j \subseteq K_{j-1} \subseteq \cdots \subseteq \Hom_{\hat R}({\sf KZ}_{\Psi}(P_{{\sf KZ},\Psi}),\hat{R})= \Hom_{\hat R}({\mathcal H}_{\Theta},{\hat R}).$$ We consider all of the above right ${\mathcal H}_{\Theta}$-modules as left modules via $\sigma.$ As the mapping $x\mapsto \left(y \mapsto t(\sigma(x)y)\right)$ from ${\mathcal H}_{\Theta}$ to $\Hom_{\hat R}({\mathcal H}_{\Theta},\hat{R})$ is a left ${\mathcal H}_{\Theta}$-isomorphism, we then have a filtration $K_{\bullet}$ of $\mathcal{H}_{\Theta}$ by left $\mathcal{H}_{\Theta}$-modules. 

We claim that each $K_j = \mathcal{H}_{\Theta}^{\geq j}$. All $K_j$ are free over $\hat{R}$ since each $\nabla(E)$ is a free $\hat{R}$-module, \cite[Proposition 4.19]{R} and ${\sf KZ}_{\Psi}(-) = \Hom_{\mathcal{O}_{\Psi}}(P_{{\sf KZ},\Psi}, -)$ and so preserves $\hat{R}$-projectivity. Thus the sections $K_{j}/K_{j+1} = \Hom_{\hat{R}}({\sf KZ}_{\Psi}(P_{j+1}/P_{j}), \hat{R})$ are a direct sum of $\Hom_{\hat R}({\sf KZ}_{\Psi}(\nabla_\Psi(E)),\hat{R})$'s with $j \leq c_{E} < j+1$. On extending scalars to $Q$ the filtration thus has sections that are direct sums of $(E^{\dagger\ast})^{\dim E} = ({E}^{\dagger})^{\dim {E}}$ where $j\leq {c}_{E} < j+1$. Furthermore, the quotients $\mathcal{H}_{\Theta}/K_j$ are $\hat{R}$-free. Hence arguing as in the third paragraph of this proof we see that $K_j = {\mathcal H}_{\Theta}^{\geq j}$ as claimed.

It follows that $K_j = P^j$ and so $\Hom_{\hat R}({\sf KZ}_{\Psi}(\nabla_\Psi(E)), \hat{R}) \cong {\sf KZ}_{\Psi}(\Delta_\Psi(E'))$ for some $E'\in \irr{W}$. Passing again to $Q$ we see that $E'$ must have the property that $E'^{\dagger} \cong E^{\dagger\ast}$.  By our assumption \ref{assumfornow} we have $E^{\dagger \ast} \cong E^\dagger$ so that $E' = E$ and $\Hom_{\hat R}({\sf KZ}_{\Psi}(\nabla_\Psi({E})), \hat{R}) \cong {\sf KZ}_{\Psi}(\Delta_\Psi({E})).$ 

We now specialise the above isomorphism to an isomorphism of ${\mathcal H}_{\theta}$-representations $$\Hom_\C({\sf KZ}_{\psi}(\nabla_{\psi}(E)), \C) \cong {\sf KZ}_{\psi}(\Delta_{\psi}({E})).$$ In $\mathcal{O}_{\psi}$ there is a unique (up to a scalar) non-zero homomorphism $\Delta_{\psi}(E) \rightarrow \nabla_{\psi}(E)$, \cite[Proposition 4.19]{R}, and this factors through $L_{\psi}(E)$. Applying ${\sf KZ}_{\psi}$ to this we find a homomorphism \begin{equation} \label{form} \phi_E: {\sf KZ}_{\psi}(\Delta_{\psi}(E)) \rightarrow {\sf KZ}_{\psi}(\nabla_{\psi}(E)) \cong {\sf KZ}_{\psi}(\Delta_{\psi}({E}))^*\end{equation} which factors through ${\sf KZ}_{\psi}(L_{\psi}(E))$. 

By definition $S_q(E) = {\sf KZ}_{\psi}(\Delta_{\psi}(E))$. So \eqref{form} induces a bilinear form $S_q(E)\otimes S_q({E}) \rightarrow \C$ via $\langle s, s'\rangle = \phi_E (s)(s')$. By construction it satisfies $\langle hs, s'\rangle = \langle s, \sigma(h)s'\rangle$ for all $h\in {\mathcal H}_{\theta}$. This proves the first part of the theorem.

We now prove that this form is symmetric. By construction, the homomorphism $\phi_E$ arises from applying ${\sf KZ}_{\Psi}$ to the canonical homomorphism $\Delta_{\Psi}(E) \rightarrow \nabla_{\Psi}(E)$. Denote this by $\Phi_E: {\sf KZ}_{\Psi}(\Delta_{\Psi}(E)) \rightarrow {\sf KZ}_{\Psi}(\nabla_{\Psi}(E)) \cong {\sf KZ}_{\Psi}(\Delta_{\Psi}({E}))^*$. We then have that $\langle s, s' \rangle$ equals the image of $\Phi_E(\hat{s})(\hat{s'})$ in the residue field $\C$ where $\hat{s}$ and $\hat{s}'$ are lifts of $s,s'$ to ${\sf KZ}_{\Psi}(\Delta_{\Psi}(E))$. We can extend $\Phi_E$ to an $\mathcal{H}_{\Theta_Q}$-mapping $\Phi_E\otimes_{\hat R} \id_{Q}: E^\dagger \cong {\sf KZ}_\Psi(\Delta_{\Psi}({E}))\otimes_{\hat{R}} Q \rightarrow {\sf KZ}_\Psi(\nabla_{\Psi}({E}))\otimes_{\hat R} Q \cong E^{\dagger\ast}$. By assumption this must be a scalar multiple of the isomorphism induced by the form $(\,\,,\,)$ on $E^{\dagger}$. As this form is symmetric, it follows that $\Phi_E(\hat{s})(\hat{s'}) = \Phi_E(\hat{s}')(\hat{s})$ and this implies the symmetry of $\langle \,\,,\,\rangle$. 

Finally the radical of the form $\langle \,\,,\,\rangle$ is the kernel of the homomorphism $\Phi_E$. By construction the quotient of $S_q(E)$ by this is  ${\sf KZ}_{\psi}(L_{\psi}(E)) = D_q(E)$. 
\end{proof}

\begin{remark} We have assumed that $\{ h_u \} \in \mathbb{R}^U$ so that the value of $c_E$ is real for all $E\in \irr{W}$. The same proof works, however, if we assume that $\{ h_u \} \in z\mathbb{R}^U$ for some complex number $z$, and it is this more general form that will be used in Proposition \ref{geckcell} (if $\lambda$ there is not real). It is more painful to write down a version of this theorem when $\{ h_u\}$ is an arbitrary complex $U$-tuple.
\end{remark}

\subsection{Geck's cell modules} \label{geckcell1} Assume now that $W$ is a finite Coxeter group. A weight function is a function $L: W\rightarrow \mathbb{Z}$ such that $L(vw)=L(v)+L(w)$ whenever $\ell(vw)=\ell(v)+\ell(w)$, where $\ell$ denotes the length function for $W$. It is equivalent to a set of integral valued parameters $\{ L_{H,0}, L_{H,1} \}_{H\in \mathcal{A}}$ with $L(s_H) = L_{H,0} = - L_{H,1}$. This gives a cyclotomic Hecke algebra ${\mathcal H}_{q,L}$ over the Laurent polynomial ring $k=\C[q^{\pm 1}]$ defined by Hecke relations $(T_H - q^{L(s_H)})(T_H+q^{-L(s_H)}) = 0$ for all $H\in \mathcal{A}.$  The paper \cite{geck} defines a cellular algebra structure on ${\mathcal H}_{q,L}$ and hence on its specialisations, provided that the conjectures (P1)-(P15) of \cite[Conjectures 14.2]{lus} hold.
\subsection{} \label{geckcell} The general theory of cellular algebras produces a set of cell modules for ${\mathcal H}_{q,L}$, and each cell module carries a symmetric bilinear form, see for instance \cite[Example 4.4]{geck}. We can now identify these with the standard modules defined in \eqref{cell and simple} and their forms defined in Theorem \ref{forms}.
\begin{proposition} Let $\theta : {\bf k}\longrightarrow \C$ send $q_{H,0}$ to $exp(2\pi i \lambda L(s_H))$ and $q_{H,1}$ to $exp(-2\pi i \lambda L(s_H))$ for some $\lambda\in \C$. Assume that (P1)-(P15) hold so that $\mathcal{H}_{\theta}$ is a cellular algebra. Then for $E \in \irr{W}$ there is an isomorphism between the cell module $W_{\theta}(E)$ defined in \cite[Example 4.4]{geck} and the standard module $S_q(E)$ which preserves the symmetric bilinear forms.
\end{proposition}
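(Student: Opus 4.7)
The plan is to identify $W_\theta(E)$ with $S_q(E)$ by realising both as specialisations of a common $\hat{R}$-lattice inside the generic irreducible $E^\dagger$ of $\mathcal{H}_{\Theta_Q}$, and then matching the bilinear forms. First, both modules are integral forms of $E^\dagger$: for $S_q(E)$ this is built into the definition via $E^\dagger = Q\otimes_{\hat R}{\sf KZ}_\Psi(\Delta_\Psi(E))$, while for $W_\theta(E)$ it follows from the general fact that cell modules of a cellular algebra are generically irreducible and parametrise $\operatorname{Irr}(\mathcal{H}_{\Theta_Q})$. Both carry $\sigma$-invariant symmetric bilinear forms whose radical quotients are simple or zero---this is Theorem \ref{forms} for $S_q(E)$ and the general cellular-algebra formalism (e.g. \cite[Example 4.4]{geck}) for $W_\theta(E)$. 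Since $E^\dagger$ is absolutely irreducible, any $\sigma$-invariant form on it is unique up to scalar, so after rescaling the two forms agree on the generic fibre.

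Second, I would match the simple quotients $D_q(E)$ and $L_\theta(E) := W_\theta(E)/\text{rad}$ by the uniqueness of canonical basic sets. Assuming (P1)--(P15), Geck's theorem states that $\{E : L_\theta(E)\neq 0\}$ is the canonical basic set with respect to Lusztig's $a$-function, whereas Proposition \ref{can bas set} gives $\{E : D_q(E)\neq 0\} = \mathbf{B}$ as a canonical basic set with respect to $c$. Geck's setting has $L_{H,0} = -L_{H,1}$, so $\sum_{H,j} m_{H,j}=0$ in \eqref{a+A}, giving $c_E = a_E+A_E$; combining this with the standard Coxeter-group symmetry $A_E = -a_{E\otimes\sign}$ (from the $q\mapsto q^{-1}$ invariance of Schur elements together with the self-inverse sign twist) shows that the $c$-ordering and the $a$-ordering agree after the sign twist, and consequently select the same basic set. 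Uniqueness of the canonical basic set then forces $D_q(E)\cong L_\theta(E)$.

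Third, I would lift this to a module isomorphism via filtration comparison. The proof of Theorem \ref{forms} constructs the filtration $\mathcal{H}_\Theta^{\geq i}$ whose sections are $\bigoplus_{i \leq c_E<i+1} S_q(E)^{\dim E}$, intrinsically characterised as the pullback to $\mathcal{H}_\Theta$ of the $c$-ordered isotypic filtration of $\mathcal{H}_{\Theta_Q}$ cut out by the requirement that successive quotients be $\hat R$-free. Geck's cellular structure provides an analogous filtration of $\mathcal{H}_{q,L}$ whose specialisation has sections $\bigoplus W_\theta(E)^{\dim E}$ and is likewise cut out from the same generic isotypic filtration by the $\hat R$-freeness condition. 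Since these intrinsic properties determine the filtration uniquely, the two filtrations coincide, and Krull--Schmidt on each section yields $S_q(E)\cong W_\theta(E)$; preservation of the bilinear form then follows from the generic matching in Step 1. The principal obstacle is the final filtration identification: verifying that Geck's cellular filtration---manufactured from Kazhdan--Lusztig cell data on $W$---coincides with the highest-weight filtration transported by ${\sf KZ}$ relies precisely on (P1)--(P15) to align the cell preorder with the $c$-preorder, and without this input the two filtrations, and hence the two modules, need not agree.
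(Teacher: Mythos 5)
Your overall architecture --- realise both modules as $\hat R$-lattices in the generic simple $E^\dagger$, use uniqueness of the $\sigma$-invariant form on $E^\dagger$ to match forms up to scalar, identify the modules by comparing the cellular filtration of $\mathcal{H}_{q,L}$ with the $c$-ordered filtration of $\mathcal{H}_\Theta$ constructed in the proof of Theorem \ref{forms}, and use multiplicity one of $D_q(E)$ in $S_q(E)$ to equate the radicals --- is essentially the paper's argument. But there is a genuine gap in your second step, and it sits exactly at the point you yourself flag as "the principal obstacle."

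You claim that $c_E=a_E+A_E$ together with $A_E=-a_{E\otimes\sign}$ shows "the $c$-ordering and the $a$-ordering agree after the sign twist, and consequently select the same basic set." This does not follow. The order induced by $a_E$ and the order induced by $a_E - a_{E\otimes\sign}$ are a priori different partial orders on $\irr{W}$, and knowing that a set is a basic set with respect to $a$ tells you nothing about whether it is one with respect to $c$ unless the two orders agree on the support of the decomposition matrix --- which is precisely what has to be proved, not asserted. The same unproved alignment is what your filtration comparison in step 3 needs: to match Geck's cell filtration with the $\mathcal{H}_\Theta^{\geq i}$ filtration you must know that the $c$-function is compatible with the two-sided cell preorder. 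The paper establishes this not by comparing $a$ and $c$ directly, but by showing \emph{separately} that $a$ and $A$ are each compatible with the two-sided cells ($a$ via \cite[Remark 5.4]{Geck2}, $A$ via \cite[Corollary 21.6]{lus} together with \cite[Proposition 2.8]{ChlJa}), whence their sum $a+A$, and hence $c$ by \eqref{a+A} (your observation that $\sum m_{H,j}=0$ here is correct and is used), is also compatible. With that compatibility in hand, \cite[Proposition 3.6]{graleh} is applied directly with the $c$-function to conclude that the nonzero radical quotients of the cell modules form the basic set with respect to $c$, and uniqueness of that basic set (Proposition \ref{can bas set}) does the rest. So the missing ingredient is the cell-compatibility of $A$; your sign-twist identity is true but is not a substitute for it.
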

\begin{proof}
Let $\Theta: {\bf k} \rightarrow k$ be defined by $q_{H,0} \mapsto q^{L(s_H)}$ and $q_{H,1} \mapsto q^{-L(s_H)}$ so that ${\mathcal H}_{q,L} = {\mathcal H}_{\Theta}$. Let ${\hat R}$ be the completion of ${\bf R}$ at the point $h_{H,0} = \lambda L(s_H), h_{H,1} = -\lambda L(s_H)$ and let $\C[[h]]$ be the completion of $\C[h]$ at $h-\lambda$. Then we have $\Psi: {\bf R}\rightarrow {\hat R} \rightarrow \C[[h]]$ be defined by $h_{H,0} \mapsto hL(s_H)$ and $h_{H,1} \mapsto -hL(s_H)$ and a commutative diagram $$\begin{CD} {\bf R} @>>> {\hat R} @>\Psi >> \C[[h]] \\ @. @AAA  @AAA \\ @. {\bf k} @> \Theta >> k,\end{CD}$$ where the right vertical map sends $q$ to $\exp(2\pi \sqrt{-1}h)$.

As explained in \cite[Example 4.4]{geck} the cell representations $W_{\theta}(E)$ are obtained by specialisation from cell modules $W_{q,L}(E)$ defined over $k$ (which themselves are constructed by pulling back representations from the asymptotic ring along Lusztig's homomorphism $\mathcal{H}_{q, L} \to J_{\Z}\otimes_{\Z} k$). Hence, for the first part of the proposition, it is enough to show that the cell representations on $k$ extended to $\C[[h]]$ are isomorphic to the representations ${\sf KZ}_{\Psi}(\Delta_{\Psi}(E))$ of Theorem \ref{forms}.  By \cite[\S 5, particularly Theorem 5.5 and the comments preceeding Remark 5.4]{Geck2} the cellular structure respects the decomposition of $\irr{W}$ into two-sided cells with respect to $L$, so the isomorphisms follow from the arguments of \cite[\S 6]{GGOR} and the characterisation of ${\sf KZ}_{\Psi}(\Delta_{\Psi}(E))$ we have given in the proof of Theorem \ref{forms} {\it provided} we know that $c$ is compatible with the ordering on two-sided cells. By \eqref{a+A}, we have that the ordering induced by $c$ is the same as that induced by $a+A$, and so it is enough to check that both $a$ and $A$ are compatible with the two-sided cells. This follows as explained in \cite[Remark 5.4]{Geck2} for $a$ and by \cite[Corollary 21.6]{lus} and \cite[Proposition 2.8]{ChlJa} for $A$.

Now we need to check that the forms agree (up to non-zero scalar). Let $\langle\,\, , \,\rangle'$ denote the bilinear form on $S_q(E)$ arising from the cellular structure, and continue with the notation from $\langle \,\, , \, \rangle$ for the form defined in Theorem \ref{forms}. Since the $c$-function is compatible with the ordering in the cell datum for ${\mathcal H}_{\theta}$, we see that the non-zero $S_q(E)/\text{rad}\langle\,\, ,\, \rangle'$ form a basic set with respect to the $c$-function, \cite[Proposition 3.6]{graleh}. Thus $D_q(E) \cong S_q(E)/\text{rad}\langle\,\, ,\, \rangle'$ and since $D_q(E)$ appears only once as a composition factor of $S_q(E)$, the radicals of $\langle \,\, , \, \rangle$ and $\langle \,\, , \, \rangle'$ are equal. It then follows that, up to scalar, the forms are the same.\end{proof}

\begin{remark}
The same proof works in type $G(\ell ,1 ,n)$ to show that the standard modules $S_q(E)$ agree with the Specht modules (intertwining the symmetric bilinear forms) defined by the cellular structure on the Ariki-Koike algebras in \cite{DJM}, provided that the parameters $h_{H,j}$ are chosen to belong to the ``asymptotic region" (see \cite[Proposition 6.4]{R} for the explicit description of this region). This proviso is necessary since it is only in this case that the ordering given by the $c$-function is compatible with the ordering by dominance of multi-partitions.
\end{remark}

\subsection{} \label{KZvanishes}Currently (P1)-(P15) are known to hold for all finite Coxeter groups except type $B_n$, where in general they are only known to hold in the asymptotic region, \cite{gecklusztIM} and \cite[Corollary 7.12]{geckrelative}. Nevertheless, we will give another argument in the next section that will imply the $B_n$ case of the following result.
\begin{corollary}
Let $W$ be a finite Coxeter group and let $\psi : {\bf R} \longrightarrow \C$ be defined by $\psi({\bf h}_{H,0}) = \lambda L(s_H)$ and $\psi({\bf h}_{H,1})=-\lambda L(s_H)$ for some weight function $L:W \rightarrow \mathbb{Z}$ and complex number $\lambda$. Then ${\sf KZ}_{\psi}(L_\psi(E)) \neq 0$ if and only if $E$ belongs to the corresponding canonical basic set.
\end{corollary}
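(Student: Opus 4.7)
The plan is to combine Proposition \ref{can bas set} with the cellular identification of Proposition \ref{geckcell}, and defer the exceptional type $B_n$ case to the next section. By Proposition \ref{can bas set}, the set ${\bf B} := \{E \in \irr{W} : {\sf KZ}_\psi(L_\psi(E)) \neq 0\}$ is automatically a basic set with respect to $c$, and by \eqref{a+A} equivalently with respect to $a+A$. Since the canonical basic set is by definition the basic set with respect to $a$ (see \S \ref{Basic Sets}), what must be shown is that ${\bf B}$ coincides with this canonical basic set.

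For every finite Coxeter type other than $B_n$, and for $B_n$ when $L$ lies in the asymptotic region, Lusztig's conjectures (P1)-(P15) are known to hold, so Proposition \ref{geckcell} applies and yields a forms-preserving isomorphism $S_q(E) \cong W_\theta(E)$ between the standard module and Geck's cell module. Consequently $D_q(E) = S_q(E)/\text{rad}\langle\,,\,\rangle$ is non-zero exactly when the cellular simple quotient $W_\theta(E)/\text{rad}$ is non-zero; and by the results of Geck, Geck-Rouquier and Geck-Jacon the set of $E$ indexing these non-vanishing cellular simples is precisely the canonical basic set with respect to $a$ for the weight function $L$. This settles the corollary in all these cases.

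The one case left over is $W = B_n$ with $L$ outside the asymptotic region, where Proposition \ref{geckcell} is unavailable because (P1)-(P15) are not known. As flagged in \S \ref{KZvanishes}, this case is handled in the next section by a uniform combinatorial argument for the groups $G(\ell,1,n)$, with $B_n$ appearing as the special case $\ell = 2$. The main obstacle of the overall proof is precisely this $B_n$ case: without access to the cellular identification, one must match the vanishing locus of ${\sf KZ}_\psi$ on simples of $\mathcal{O}_\psi$ directly with Jacon's multi-partition description of the canonical basic set, which is why the paper needs a separate section devoted to combinatorics for $G(\ell,1,n)$.
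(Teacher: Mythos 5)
Your proposal is correct and follows essentially the same route as the paper: the corollary is stated there as an immediate consequence of Proposition \ref{geckcell} (cellular identification under (P1)--(P15), hence $D_q(E)\neq 0$ iff the cell module's simple quotient is non-zero, which by Geck's work characterises the canonical basic set), with the remaining $B_n$ cases deferred to the $G(\ell,1,n)$ combinatorics of the final section, exactly as you flag. Your explicit observation that one must upgrade from a basic set with respect to $c$ (equivalently $a+A$) to one with respect to $a$ is the right point to isolate, and it is handled the same way in the paper.
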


The canonical basic sets are known explicitly, \cite{Gecksurv}.
\section{Type $G(\ell,1,n)$}

\subsection{} \label{paramchoiceforfamily}We are going to consider the case $W = G(\ell ,1,n)$ and in particular study the existence of canonical basic sets for the Hecke algebra $\mathcal{H}_{\theta}$ where $\theta$ is induced by $\psi({\bf h}_{H,j})  = \frac{s_j}{e} - \frac{j}{\ell}$ and $\psi({\bf h}_0) = \frac{1}{e}$, $\psi({\bf h}_1) = 0$ where $(s_0, \ldots , s_{\ell - 1}) \in \Z^{\ell}$ and $e \in \Z_{>0}$. 
In other words, we will study the Ariki-Koike algebra with relations 
\begin{equation}
(T_i-\zeta_e)(T_i+1) = 0, \qquad (T_0 - \zeta_e^{s_0})(T_0 - \zeta_e^{s_1})\cdots (T_0 - \zeta_e^{s_{\ell - 1}}) = 0.
\end{equation}
\subsection{} There are several different cyclotomic specialisations to the above Ariki-Koike algebra and they may have distinct $a$-functions attached to them. To deal with this generality we follow the combinatorial construction of the $a$-functions in \cite{GeJa} and show that they are all compatible with the highest weight structure on $\mathcal{O}_{\psi}$. 

To this end as well as the integer $e$ and the $\ell$-tuple $(s_0, \ldots , s_{\ell-1 })\in \mathbb{Z}^{\ell}$ we will need ${\bf u} = (u_0,\ldots,u_{\ell-1}) \in \mathbb{Q}^{\ell}$, a list of rational numbers such that $0<u_j-u_i<e$ whenever $i<j$. Set $t_j=s_j-u_j$, for all $0\leq j\leq \ell - 1$, and ${\bf t}=(t_0,\ldots,t_{\ell-1})$.

\subsection{} 
Recall that the irreducible representations of $G(\ell ,1,n)$ are labelled by the set of $\ell$-partitions of $n$, \cite[3.1]{Gr}. We will denote this by $\lambda \mapsto E^{\lambda}\in \irr{G(\ell , 1 ,n)}$. 

\subsection{} Given $\lambda=(\lambda^{(0)},\ldots,\lambda^{(\ell-1)})$ an $\ell$-composition of $n$, the set of nodes of $\lambda$ is the set
$$[\lambda]=\{ (a,b,c): 0 \leq c \leq \ell-1,\,\,a \geq 1,\,\,1 \leq b \leq \lambda_a^{(c)}\}.$$
Let $\gamma=(a(\gamma),b(\gamma),c(\gamma)) = (a,b,c)$  be a node of $\lambda$. We let
$$\mathrm{cont}(\gamma)=b-a,\,\,\,
\vartheta(\gamma)=\mathrm{cont}(\gamma)+s_{c}\,\,\,\text{and}\,\,\,
\eta(\gamma)=\mathrm{cont}(\gamma)+t_{c}.$$

Fix $z$ to be a positive integer greater than or equal to $n+1 - \min\{t_j\}$. Define for each $0\leq i \leq \ell-1$ the set $B_{r+t_i-1}(\lambda^{(i)})=\{ \lambda^{(i)}_t - t + t_i + z : 1 \leq t \leq r + [t_i]\}$ where $[t_i]$ denotes the integral part of $t_i$. Now let \begin{equation*} \label{betanumbers} \kappa_1 (\lambda) \geq \kappa_2(\lambda) \geq  \kappa_3(\lambda) \geq \cdots\end{equation*} be the elements of these sets, written in descending order. We will denote this list by $\kappa_{\bf t}(\lambda)$.

 \label{gloss} We define
  \begin{equation} \label{ndefn}n_{\bf t}(\lambda) = \sum_{i \geq 1} (i-1)\kappa_i(\lambda)\end{equation}
 and 
 \begin{equation} \label{adefn}a_{\bf t}(\lambda) = n_{\bf t}(\lambda)-n_{\bf t}(\emptyset).
 \end{equation} This depends on both ${\bf s}$ and on ${\bf u}$. If we choose $u_j = je/\ell$ for $0\leq j \leq \ell -1$ then $a_{\bf t}$ agrees with the definition of $a$-function given in \cite{jaca} and studied in the context of Uglov's work on canonical bases for higher level Fock spaces. On the other hand, in type $B$ ($\ell = 2$), another choice of ${\bf u}$ is presented in \cite[6.7]{GeJa} which produces the $a$-function arising from the Kazhdan-Lusztig theory for the Hecke algebras with unequal parameters as in \ref{geckcell1}. This definition is therefore captures all $a$-functions  for $G(\ell , 1,n)$ in the literature.

\subsection{} 
Generalising the dominance order for partitions, we will write $\kappa_{\bf t}(\lambda) \triangleleft \kappa_{\bf t}(\lambda')$ if $\kappa_{\bf t}(\lambda) \neq \kappa_{\bf t}(\lambda')$ and $ \sum_{i=1}^t \kappa_i(\lambda) \leq  \sum_{i=1}^t \kappa_i(\lambda')$ for all $t \geq 1$.  If
$\lambda$ and $\lambda'$ are $\ell$-compositions such that $\kappa_{\bf t}(\lambda) \triangleleft \kappa_{\bf t}(\lambda')$, then  $a_{\bf t}(\lambda)>  a_{\bf t}(\lambda')$.

\begin{lemma}\label{dominance}
Let $\mu$, $\mu'$ be $\ell$-compositions with $\kappa_{\bf t}(\mu) \trianglelefteq \kappa_{\bf t}(\mu')$.
Let $\lambda$ (respectively $\lambda'$) be an $\ell$-composition obtained from $\mu$ (respectively $\mu'$) by adding an extra node $\beta$ (respectively $\beta'$). If $\eta(\beta)< \eta(\beta')$, then
$\kappa_{\bf t}(\lambda) \triangleleft \kappa_{\bf t}(\lambda').$
\end{lemma}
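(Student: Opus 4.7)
The plan is to reduce the lemma to a comparison of multisets of $\beta$-numbers and to exploit the variational formula
\[
\sigma_k(M) := \sum_{i=1}^k \kappa_i(M) = \min_{y}\left[k y + \sum_{x \in M}(x-y)^+\right],
\]
valid for any finite multiset $M$ of reals, via the companion identity $\sum_{x \in M}(x-y)^+ = \max_j(\sigma_j(M) - j y)$.

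First I would translate the node-addition into the $\beta$-language. Adding the node $\beta$ at position $(a,b,c)$ to $\mu$ simply replaces the entry $\mu^{(c)}_a - a + t_c + z = \eta(\beta) + z - 1$ of the underlying multiset of $\kappa_{\bf t}(\mu)$ by $\lambda^{(c)}_a - a + t_c + z = \eta(\beta) + z$. Setting $v = \eta(\beta) + z - 1$ and $v' = \eta(\beta') + z - 1$, the hypothesis becomes $v < v'$, and the multisets underlying $\kappa_{\bf t}(\lambda)$ and $\kappa_{\bf t}(\lambda')$ are obtained from those underlying $\kappa_{\bf t}(\mu)$ and $\kappa_{\bf t}(\mu')$ by the swaps $v \mapsto v+1$ and $v' \mapsto v'+1$ respectively.

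Next, write $F_M(y) := k y + \sum_{x \in M}(x-y)^+$ so that $\sigma_k(M) = \min_y F_M(y)$. The hypothesis $\kappa_{\bf t}(\mu) \trianglelefteq \kappa_{\bf t}(\mu')$ combined with the max-identity above gives the pointwise inequality $F_\mu(y) \leq F_{\mu'}(y)$ for every $y$. The effect of the swap $v \mapsto v+1$ is to shift $F_M$ by the function $g(v,y) := (v+1-y)^+ - (v-y)^+$, a piecewise linear ramp equal to $1$ for $y \leq v$, to $v+1-y$ for $y \in [v,v+1]$, and to $0$ for $y \geq v+1$; an inspection of cases shows $g(v,y)$ is non-decreasing in $v$ for each fixed $y$. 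Combining these two monotonicities yields $F_\lambda(y) = F_\mu(y) + g(v,y) \leq F_{\mu'}(y) + g(v',y) = F_{\lambda'}(y)$ pointwise, and taking minima over $y$ gives $\sigma_k(\lambda) \leq \sigma_k(\lambda')$ for every $k$, establishing the weak dominance $\kappa_{\bf t}(\lambda) \trianglelefteq \kappa_{\bf t}(\lambda')$.

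The main obstacle is upgrading this to strict inequality. For this I would argue by contradiction: suppose the underlying $\beta$-multisets of $\lambda$ and $\lambda'$ coincide. Comparing multiplicities at the values $v, v+1, v', v'+1$ (with a minor adjustment in the degenerate case $v+1 = v'$) forces $v'+1$ to lie in the $\beta$-multiset of $\mu$ and pins down that of $\mu'$ as the $\beta$-multiset of $\mu$ with one copy each of $v$ and $v'+1$ removed and one copy each of $v+1$ and $v'$ inserted. Since $v'+1 > v$, the position of $v'+1$ in the descending sort of the $\beta$-multiset of $\mu$ is strictly smaller than that of $v$; for any $k$ strictly between these two positions the swap $v'+1 \mapsto v'$ contributes exactly $-1$ while the swap $v \mapsto v+1$ contributes strictly less than $1$ to $\sigma_k(\mu')-\sigma_k(\mu)$, forcing $\sigma_k(\mu') < \sigma_k(\mu)$ and contradicting $\kappa_{\bf t}(\mu) \trianglelefteq \kappa_{\bf t}(\mu')$. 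Hence the two multisets differ, and combined with weak dominance this gives $\kappa_{\bf t}(\lambda) \triangleleft \kappa_{\bf t}(\lambda')$.
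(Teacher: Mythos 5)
Your proof is correct in substance, but it takes a genuinely different route from the paper's. The paper proves the lemma by pure bookkeeping: it records the positions $j,j'$ of the incremented entries and the positions $l,l'$ where they re-insert into the descending sort, writes out the re-sorted sequences explicitly, and verifies the partial-sum inequalities by distinguishing six cases according to the relative order of $l,l',j,j'$. You instead invoke the duality $\sum_{i\le k}\kappa_i(M)=\min_y\bigl[ky+\sum_{x\in M}(x-y)^+\bigr]$ together with its companion $\sum_{x\in M}(x-y)^+=\max_j\bigl(\sigma_j(M)-jy\bigr)$, which converts dominance into a pointwise comparison of convex piecewise-linear functions; the whole weak inequality then reduces to the one-line observation that the ramp $g(v,y)=(v+1-y)^+-(v-y)^+$ is non-decreasing in $v$. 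This eliminates the case analysis entirely and makes transparent why the hypothesis $\eta(\beta)<\eta(\beta')$ is exactly what is needed; the price is that strictness must be handled separately, which the paper gets for free from its explicit sums. Two small remarks on your endgame. First, the phrase ``for any $k$ strictly between these two positions'' can be vacuous when the two positions are adjacent; the correct choice is $k=p$, the position of the last copy of $v'+1$ in the descending sort of $\mu$'s multiset, where one checks directly that $\sigma_p(\mu')<\sigma_p(\mu)$. Second, your own machinery gives a cleaner finish: if the multisets of $\lambda$ and $\lambda'$ coincided, then $D_{\mu'}-D_{\mu}=g(v,\cdot)-g(v',\cdot)\le 0$ with strict inequality on a nonempty open interval (where $D_M(y)=\sum_{x\in M}(x-y)^+$), so $\kappa_{\bf t}(\mu')\trianglelefteq\kappa_{\bf t}(\mu)$ with $D_{\mu}\neq D_{\mu'}$; combined with the hypothesis $\kappa_{\bf t}(\mu)\trianglelefteq\kappa_{\bf t}(\mu')$ this forces $D_\mu=D_{\mu'}$, a contradiction, with no multiplicity or position bookkeeping at all.
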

\begin{proof}
Let
$ \kappa_{\bf t}(\mu) = \kappa_1 \geq \kappa_2 \geq \kappa_3 \geq \cdots $
 and
$\kappa_{\bf t}(\mu') = \kappa_1' \geq \kappa_2' \geq \kappa_3' \geq \cdots $.
By hypothesis
we have
$ \sum_{i=1}^t \kappa_i \leq  \sum_{i=1}^t \kappa_i'$, for all $t \geq 1$.

The nodes $\beta$ and $\beta'$ are added to the end of a row (which may be empty)
of the compositions $\mu$ and $\mu'$. This implies that there exist  $j$ and $j'$ such that
$$\kappa_{\bf t}(\lambda) = (\kappa_{\bf t}(\mu) \setminus \{\kappa_j\}) \cup \{\kappa_j+1\}
 \,\,\,\textrm{and}\,\,\,
 \kappa_{\bf t}(\lambda') = (\kappa_{\bf t}(\mu') \setminus \{\kappa_{j'}'\}) \cup \{\kappa_{j'}'+1\},$$
where $\kappa_j+1= \eta(\beta)+z$ and $\kappa_{j'}'+1=\eta(\beta')+z$. Since
$\eta(\beta)< \eta(\beta')$, we must have  $\kappa_j < \kappa_{j'}'$.
Setting $\kappa_0=\kappa_0' =\infty$, there exist $1\leq l \leq j$ 
and $1\leq l' \leq j'$
such that
$$\kappa_j \leq \kappa_l \leq \kappa_j+1 < \kappa_{l-1}
\,\,\,\textrm{and}\,\,\,
\kappa_{j'}'\leq \kappa_{l'}' \leq \kappa_{j'}'+1 < \kappa_{l'-1}'.$$
We then have
$ \kappa_{\bf t}(\lambda) = k_1 \geq k_2 \geq k_3 \geq \cdots 
$ and $
\kappa_{\bf t}(\lambda') = k_1' \geq k_2' \geq k_3' \geq \cdots, $
where
 $$
k_i := \left\{\begin{array}{ll}
\kappa_i & \textrm{ for } i<l \textrm{ or } i \geq j+1 ;\\
\kappa_j+1 & \textrm{ for } i=l;\\
\kappa_{i-1}& \textrm{ for } l<i <j+1,\\
\end{array}\right.
\,\,\,\textrm{and}\,\,\,
k_i' := \left\{\begin{array}{ll}
\kappa_i' & \textrm{ for } i<l' \textrm{ or } i \geq j'+1 ;\\
\kappa_{j'}'+1 & \textrm{ for } i=l';\\
\kappa_{i-1}'& \textrm{ for } l'<i <j'+1.\\
\end{array}\right.$$
One then shows
$\sum_{i=1}^t k_i \leq  \sum_{i=1}^t k_i'$
for all $t \geq 1$, and that there exists some $t$ such that the inequality is strict,
by distinguishing the six cases:
$$l' \leq l \leq j \leq j';\,\,
l' \leq l \leq j' \leq j;\,\,
l' \leq j'  <  l \leq j;\,\,
l \leq l' \leq j \leq j';\,\,
l \leq l' \leq j' \leq j;\,\,
l \leq j  <  l' \leq j'.$$

We conclude that 
$\kappa_{\bf t}(\lambda) \triangleleft \kappa_{\bf t}(\lambda').$
\end{proof}

A variation of the above lemma has been first used in the proof of \cite[Proposition 5.7.15]{GeJa}. It is needed for the proof of Proposition 5.6, which is a generalisation of the result of Geck and Jacon.

\subsection{}

 Let $\gamma$
and   $\gamma'$ be nodes of $\ell$-compositions. 
We write
$\gamma \prec \gamma'$
if we have
$\vartheta(\gamma)<\vartheta(\gamma')$ or if 
$\vartheta(\gamma)=\vartheta(\gamma') \,\textrm{ and }\,c(\gamma)>c(\gamma').$

\begin{proposition}
Let $\lambda$, $\lambda'$ be  $\ell$-compositions of $n$. Suppose that there exist orderings $\gamma_1,\gamma_2,\ldots,\gamma_n$ and $\gamma_1',\gamma_2',\ldots,\gamma_n'$ of the nodes of $\lambda$ and $\lambda'$ respectively such that,  for all $i=1,\ldots,n$,
$$\gamma_i \prec \gamma_i' \,\,\textrm{ or }\,\, \gamma_i = \gamma_i'.$$
Then either $\lambda=\lambda'$ or $a_{\bf t}(\lambda) > a_{\bf t}(\lambda')$.

\end{proposition}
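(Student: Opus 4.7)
The plan is to proceed by induction on $n$. For $n=0$ there is nothing to prove. For the inductive step, assume the result for $n-1$ and suppose $\lambda \neq \lambda'$. First, invoke the standard rearrangement observation that pointwise comparability of two finite sequences (under any total order) is preserved when both sequences are sorted in the same direction: if $A_i \preceq B_i$ for all $i$ and $\widetilde A, \widetilde B$ denote the $\prec$-increasing sorts of $A, B$, then $\widetilde A_i \preceq \widetilde B_i$ for all $i$, because each element $\widetilde B_i$ dominates at least $n-i+1$ entries of $B$ which in turn dominate the corresponding entries of $A$. Applying this to our orderings, we may assume $\gamma_1 \prec \cdots \prec \gamma_n$ and $\gamma_1' \prec \cdots \prec \gamma_n'$, still with $\gamma_i \preceq \gamma_i'$ for all $i$.

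Next, observe that in a $\prec$-increasing ordering the last entry is a $\prec$-maximal node of the composition, hence automatically removable: if $\gamma = (a,b,c)$ were not removable then $(a,b+1,c)$ would also be a node, with $\vartheta$ one greater, which would be strictly $\prec$-larger. So $\gamma_n$ is a corner node of $\lambda$ and $\gamma_n'$ is a corner node of $\lambda'$. Set $\mu = \lambda \setminus \{\gamma_n\}$ and $\mu' = \lambda' \setminus \{\gamma_n'\}$; these are $\ell$-compositions of $n-1$ whose truncated orderings still satisfy the hypothesis. By the inductive hypothesis, $\kappa_{\bf t}(\mu) \trianglelefteq \kappa_{\bf t}(\mu')$.

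Finally, one applies Lemma~\ref{dominance} with $\beta = \gamma_n$ and $\beta' = \gamma_n'$ (supplemented, in the case $\gamma_n = \gamma_n'$ but $\mu \neq \mu'$, by a companion statement asserting that adding a common node to a pair of $\triangleleft$-related compositions preserves $\triangleleft$) to obtain $\kappa_{\bf t}(\lambda) \triangleleft \kappa_{\bf t}(\lambda')$, and hence $a_{\bf t}(\lambda) > a_{\bf t}(\lambda')$. The main obstacle is exactly this last step: Lemma~\ref{dominance} requires the strict inequality $\eta(\gamma_n) < \eta(\gamma_n')$, whereas our hypothesis only delivers $\gamma_n \prec \gamma_n'$, which is a relation on $\vartheta = \eta + u_c$. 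Because the $u_c$ are constrained only by $0 < u_j - u_i < e$, translating $\prec$ into $\eta$ is not automatic and requires a case analysis according to whether ties in $\prec$ are broken by $\vartheta$ or by $c$, using integrality of $\vartheta$ together with the removability of $\gamma_n$ and $\gamma_n'$. This combinatorial bridge between the two orderings is the technical heart of the argument and parallels the bookkeeping carried out in \cite[Proposition 5.7.15]{GeJa}, of which the present proposition is a generalisation.
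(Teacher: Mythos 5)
Your strategy---sort both lists $\prec$-increasingly, peel off the $\prec$-maximal (hence removable) nodes, and induct on $n$---is a genuinely different decomposition from the paper's, which never sorts by $\prec$: it first matches up the literally equal nodes, reduces to disjoint tails satisfying $\eta(\beta_i)<\eta(\beta_i')$ pairwise, re-sorts only those tails by increasing $\eta$ (a short counting argument shows the strict inequalities survive this re-sorting), and then rebuilds $\lambda$ and $\lambda'$ node by node via Lemma~\ref{dominance}. Your route could probably be made to work, but as written it has a genuine gap exactly where you locate the ``technical heart,'' and the tools you propose for closing it do not suffice. Lemma~\ref{dominance} needs $\eta(\gamma_n)<\eta(\gamma_n')$, and $\gamma_n\prec\gamma_n'$ delivers this in only two of the three cases: when $\vartheta(\gamma_n)=\vartheta(\gamma_n')$ and $c(\gamma_n)>c(\gamma_n')$, monotonicity of the $u_j$ works, and similarly when $\vartheta(\gamma_n)<\vartheta(\gamma_n')$ with $c(\gamma_n)\geq c(\gamma_n')$. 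But when $\vartheta(\gamma_n)<\vartheta(\gamma_n')$ and $c(\gamma_n)<c(\gamma_n')$ one has $\eta(\gamma_n')-\eta(\gamma_n)=\bigl(\vartheta(\gamma_n')-\vartheta(\gamma_n)\bigr)-\bigl(u_{c(\gamma_n')}-u_{c(\gamma_n)}\bigr)$, where integrality only guarantees the first bracket is $\geq 1$ while the second may be anything in $(0,e)$; removability of the nodes contributes nothing here. What rescues the implication in the intended application is that in Theorem~\ref{DGGJ} the differences $\vartheta(\gamma_i')-\vartheta(\gamma_i)$ are forced to be multiples of $e$ (this is exactly what the congruence on the $\mu_i$ encodes), so they dominate any $u$-difference. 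The paper's own proof simply asserts $\eta(\gamma_i)<\eta(\gamma_i')$ at the outset; you correctly sensed the assertion is not automatic, but your proposed case analysis cannot derive it from the stated hypotheses---it has to be imported from the context in which the Proposition is applied.

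Two further steps are used but not established. First, your induction must be carried on the stronger statement $\kappa_{\bf t}(\mu)\trianglelefteq\kappa_{\bf t}(\mu')$ with equality only if $\mu=\mu'$: the Proposition's conclusion $a_{\bf t}(\mu)>a_{\bf t}(\mu')$ does not imply the dominance relation that Lemma~\ref{dominance} consumes, yet you invoke dominance as the inductive hypothesis. Second, $\prec$ is only a total preorder on nodes (distinct nodes in the same component with equal content are $\prec$-incomparable), so after sorting you can be left with a pair $\gamma_n\neq\gamma_n'$ having $\vartheta(\gamma_n)=\vartheta(\gamma_n')$, $c(\gamma_n)=c(\gamma_n')$ and hence $\eta(\gamma_n)=\eta(\gamma_n')$; neither Lemma~\ref{dominance} nor your unproven ``companion statement'' for equal nodes covers this, so the companion lemma must be proved for nodes of equal $\eta$-value, with strictness inherited from the inductive step. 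The paper sidesteps both issues by extracting the genuinely equal pairs before any sorting and by working directly with the $\kappa$-sequences throughout.
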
 

\begin{proof}
For all $1\leq i\leq n$, we have $\eta(\gamma_i)<\eta(\gamma_i')$,
unless $\gamma_i=\gamma_i'$. 
If there exist $1 \leq i \neq j \leq n$ such that
$\gamma_i \neq \gamma_i'$,
$\gamma_j \neq \gamma_j'$ and
 $\gamma_i'=\gamma_j$, 
 then we can exchange $\gamma_i'$ and $\gamma_j'$ in the ordering of the nodes of $\lambda'$ and get
$\eta(\gamma_i)<\eta(\gamma_i') \,\,\textrm{ and }\,\, \gamma_j=\gamma_j'.$
Therefore, we obtain orderings $\beta_1,\beta_2,\ldots,\beta_n$
and $\beta_1',\beta_2',\ldots,\beta_n'$ on the
 nodes of $\lambda$ and $\lambda'$ respectively such that for some $r \in \{0,1,\ldots,n\}$
\begin{itemize}
\item $\beta_i=\beta_i'$ \,for $i=1,\ldots,r$,\,
\item $\eta(\beta_i)<\eta(\beta_i')$ \,for $i=r+1,\ldots,n$\,
\item $\{\beta_{r+1},\ldots,\beta_n\} \cap \{\beta_{r+1}',\ldots,\beta_n'\}=\emptyset$.
\end{itemize}

Let $\mu$ be the $\ell$-composition defined by the nodes $\beta_1,\ldots,\beta_r$. 
If $r=n$, then $\lambda=\lambda'$. Otherwise, 
we have
$[\lambda]=[\mu] \cup \{\beta_{r+1},\ldots,\beta_{n}\}$ and $[\lambda']=[\mu] \cup \{\beta_{r+1}',\ldots,\beta_{n}'\},$
where
\begin{equation}\label{pairwise}
\eta(\beta_i)<\eta(\beta_i') \,\textrm{ for } r+1\leq i\leq n.
\end{equation}

Now, let $(b_1,b_2,\ldots,b_{n-r})$ be the nodes $\beta_{r+1},\ldots,\beta_{n}$ ordered with respect to increasing $\eta$-function and let $(b_1',b_2',\ldots,b_{n-r}')$ be the nodes $\beta_{r+1}',\ldots,\beta_{n}'$ ordered with respect to increasing $\eta$-function. 
We can then add the nodes $b_{1},\ldots,b_{n-r}$ 
(respectively $b_{1}',\ldots,b_{n-r}'$)
to $\mu$ in order to obtain $\lambda$ (respectively $\lambda'$) in the given order, \emph{i.e.,} we can always add the nodes $b_i$ and $b_{i}'$ at the same time, for all $i=1,\ldots,n-r$.
We will prove by induction that $\eta(b_i) < \eta (b_i')$ for all $i=1,\ldots,n-r$.

Take $1\leq t \leq n-r$ and assume that $\eta(b_i) < \eta (b_i')$ for all $i=1,\ldots,t-1$.
If $\eta(b_t) \geq \eta (b_t')$, then there exist only $t-1$ nodes in $\{\beta_{r+1},\ldots,\beta_{n}\}$
which have $\eta$-value less than $b_t'$. This contradicts Equation (\ref{pairwise}).
Hence, $\eta(b_t) < \eta (b_t')$.

We can now apply Lemma \ref{dominance} repeatedly to obtain that
$\kappa_{\bf t}(\lambda) \triangleleft \kappa_{\bf t}(\lambda'),$
whence $a_{\bf t}(\lambda)>a_{\bf t}(\lambda')$. 
\end{proof}

\subsection{} Now we can compare the ordering by $a_{\bf t}$ with the ordering on $\mathcal{O}_{\psi}$, where $\psi$ is defined in \ref{paramchoiceforfamily}. 

\begin{theorem}\label{DGGJ}
Let $\lambda$, $\lambda'$ be  $\ell$-partitions of $n$. If
$[\Delta_{\psi}(E^\lambda):L_{\psi}(E^{\lambda'})] \neq 0$ then 
 $\lambda=\lambda'$ or $a_{{\bf t}}(\lambda)>a_{\bf t}(\lambda')$.
\end{theorem}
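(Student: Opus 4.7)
By the highest weight structure of $\mathcal{O}_\psi$, a composition factor $L_\psi(E^{\lambda'})$ of $\Delta_\psi(E^\lambda)$ must satisfy $E^{\lambda'}\leq_\psi E^\lambda$, so the hypothesis forces either $\lambda=\lambda'$ or $c_{E^\lambda}-c_{E^{\lambda'}}\in\Z_{>0}$. Assume the latter; the aim is then to verify the hypotheses of the preceding proposition, from which $a_{\bf t}(\lambda)>a_{\bf t}(\lambda')$ will follow.

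The first step is to evaluate $c_{E^\lambda}$ at the specialisation $\psi$ of \ref{paramchoiceforfamily}. Computing the eigenvalue of the central element of $\C[W]$ defining $c_E$ on $E^\lambda$ -- a calculation that splits into contributions from the level-$2$ and level-$\ell$ reflection classes -- yields a formula of the form
$$c_{E^\lambda}\ =\ K_n\ -\ \frac{2}{e}\sum_{\gamma\in[\lambda]}\vartheta(\gamma),$$
where $K_n$ depends on $n$, $e$, $\ell$ and the $s_j$'s but not on $\lambda$. Consequently the hypothesis is equivalent to
$$\sum_{\gamma\in[\lambda]}\vartheta(\gamma)\ <\ \sum_{\gamma'\in[\lambda']}\vartheta(\gamma'),$$
accompanied by the integrality of $\tfrac{2}{e}$ times this difference.

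The second step, which is the technical core, is to upgrade this sum inequality to the pointwise node-by-node inequality $\gamma_i\preceq\gamma_i'$ required by the preceding proposition. A sum inequality does not, in general, imply a pointwise one -- multisets of reals can have ordered sums but sorted sequences that are incomparable -- so one must exploit the module-theoretic content of the hypothesis. I would use the commuting Jucys--Murphy-type operators in the Cherednik algebra of $G(\ell,1,n)$, which act diagonally on $\Delta_\psi(E^\lambda)$ in a basis of joint eigenvectors indexed by standard $\ell$-tableaux of $\lambda$, with joint eigenvalue on a tableau $T$ recording the sequence $\vartheta(T^{-1}(1)),\ldots,\vartheta(T^{-1}(n))$. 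The occurrence of $L_\psi(E^{\lambda'})$ as a composition factor then produces a standard $\ell$-tableau $T$ of shape $\lambda$ whose residue sequence coincides with that of a canonical $\prec$-increasing tableau of $\lambda'$. Bijecting the nodes of $[\lambda]$ with those of $[\lambda']$ through these labellings, the standardness of $T$ -- combined with the rational refinements $u_j$ (chosen so that $0<u_j-u_i<e$ for $i<j$, so that $\eta$ separates nodes across components consistently) -- yields $\gamma_i\preceq\gamma_i'$ for each $i$. The preceding proposition then gives $a_{\bf t}(\lambda)>a_{\bf t}(\lambda')$.

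The principal obstacle is the tableau-matching argument in the second step: transferring the decomposition data across the Jucys--Murphy eigenbasis to yield pointwise $\prec$-inequalities on nodes. This is the genuine generalisation of the type $B$ reasoning of \cite[Proposition 5.7.15]{GeJa} to arbitrary $\ell$, and the uniform, component-independent handling of ties via the $u_j$ is the main new ingredient required.
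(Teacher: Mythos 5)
Your overall strategy coincides with the paper's: reduce to the preceding Proposition by extracting, from the composition-factor hypothesis, orderings of the nodes of $\lambda$ and $\lambda'$ with $\gamma_i\prec\gamma_i'$ or $\gamma_i=\gamma_i'$ for every $i$, and you are right that the tool for this is the commuting (Cherednik--Jucys--Murphy) subalgebra rather than the bare inequality $c_{E^{\lambda'}}<c_{E^\lambda}$, which, as you note, only controls $\sum_\gamma\vartheta(\gamma)$ and cannot give pointwise information. The paper does exactly this, importing the representation-theoretic input as a black box from \cite[Proof of Theorem 4.1]{DG}.

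However, the step you flag as the ``principal obstacle'' is the entire content of the proof, and your sketch of it is not correct as stated. The hypothesis $[\Delta_{\psi}(E^\lambda):L_{\psi}(E^{\lambda'})]\neq 0$ does \emph{not} produce a tableau of $\lambda$ whose $\vartheta$-residue sequence \emph{coincides} with that of a tableau of $\lambda'$: the lowest weight space of $L_{\psi}(E^{\lambda'})$ sits in some non-negative polynomial degree inside $\Delta_{\psi}(E^\lambda)$, and the joint eigenvalues of the commuting operators record not only $\vartheta$ of a node but also that degree and the $\Z/\ell$-character of the node's component. What one actually obtains from \cite{DG} is: orderings of the nodes together with \emph{non-negative integers} $\mu_i$ satisfying $\mu_i\equiv c(\gamma_i)-c(\gamma_i')\ \mathrm{mod}\ \ell$ and $\mu_i=c(\gamma_i)-c(\gamma_i')+\frac{\ell}{e}\bigl(\vartheta(\gamma_i')-\vartheta(\gamma_i)\bigr)$. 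Converting this into $\gamma_i\prec\gamma_i'$ or $\gamma_i=\gamma_i'$ is then a genuine case analysis ($\mu_i\geq\ell$ forces $\vartheta(\gamma_i)<\vartheta(\gamma_i')$; for $\mu_i<\ell$ one compares $c(\gamma_i)$ with $c(\gamma_i')$ and invokes the tie-breaking clause in the definition of $\prec$, which was designed for exactly this), and it is precisely here that the specific choice $\psi({\bf h}_{H,j})=\frac{s_j}{e}-\frac{j}{\ell}$ enters. Your appeal to ``standardness of $T$'' and to the rational shifts $u_j$ cannot substitute for this: the $u_j$ enter only later, through $\eta$ in Lemma \ref{dominance} and Proposition 5.4, not in deriving the node-matching. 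So the proposal identifies the right tool and the right reduction, but the decisive step is left unproved and the intermediate claim it rests on (coincidence of residue sequences) is false.
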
 
 
 \begin{proof}
Following  \cite[Proof of Theorem 4.1]{DG}, if
$[\Delta_{\psi}(E^\lambda):L_{\psi}(E^{\lambda'})] \neq 0$, then there exist orderings $\gamma_1,\gamma_2,\ldots,\gamma_n$ and $\gamma_1',\gamma_2',\ldots,\gamma_n'$ of the nodes of $\lambda$ and $\lambda'$ respectively, and non-negative integers $\mu_1,\mu_2,\ldots,\mu_n$  such that, for all $1\leq i\leq n$,
$$\mu_i \equiv c(\gamma_i)-c(\gamma_i')\,\,\mathrm{mod}\,\ell
\,\,\,\textrm{ and }\,\,\,
\mu_i= c(\gamma_i) - c(\gamma_i') + \frac{\ell }{e}(\vartheta(\gamma_i')-\vartheta(\gamma_i)).$$
If $\mu_i \geq \ell$, then $\vartheta(\gamma_i) < \vartheta(\gamma_i')$, whence
$\gamma_i \prec \gamma_i'$. Otherwise,
$$\mu_i=
\left\{\begin{array}{ll}
c(\gamma_i)-c(\gamma_i'), &\textrm{ if } c(\gamma_i) \geq c(\gamma_i')\\
\ell+c(\gamma_i)-c(\gamma_i'), &\textrm{ if } c(\gamma_i) < c(\gamma_i').
\end{array}\right.$$

Now if $c(\gamma_i) < c(\gamma_i')$, then $\vartheta(\gamma_i) < \vartheta(\gamma_i')$, whence
$\gamma_i \prec \gamma_i'$. If $c(\gamma_i) > c(\gamma_i')$, then $\vartheta(\gamma_i) =\vartheta(\gamma_i')$ and
$\gamma_i \prec \gamma_i'$. Finally, if $c(\gamma_i) = c(\gamma_i')$, then $\mathrm{cont}(\gamma_i) =\mathrm{cont}(\gamma_i')$ and $\gamma_i$ appears in $\lambda'^{(c(\gamma_i'))}$ or $\gamma_i'$ appears in $\lambda^{(c(\gamma_i))}$. In  either case, we can rearrange the nodes so that $\gamma_i=\gamma_i'$. 

We conclude that  there exist orderings $\gamma_1,\gamma_2,\ldots,\gamma_n$ and $\gamma_1',\gamma_2',\ldots,\gamma_n'$ of the nodes of $\lambda$ and $\lambda'$ respectively such that $\gamma_i \prec \gamma_i' \,\,\textrm{ or }\,\, \gamma_i = \gamma_i'$ for all $1\leq i \leq n$.
Proposition 5.4 thus completes the proof.
\end{proof}

\subsection{} Thanks to the above theorem and the comments in \ref{gloss} we see, by applying ${\sf KZ}_{\psi}$, that a canonical basic set exists for $\mathcal{H}_\theta$. This recovers \cite[Main Theorem]{jaca} but without using Ariki's theorem.

On the other hand, invoking Ariki's theorem gives us the following result, analogous to Corollary \ref{KZvanishes}. We refer the reader to \cite[Definition 3.2]{jaca} for the combinatorial definition of Uglov $\ell$-partitions.

\begin{corollary} \label{uglov} Let $W = G(\ell ,1,n)$. Let $e\in \mathbb{Z}_{>0}$, $(s_0, \ldots , s_{\ell -1})\in \mathbb{Z}^{\ell}$ and define $\psi: {\bf R}\rightarrow \C$ by $\psi({\bf h}_{H,j})  = \frac{s_j}{e} - \frac{j}{\ell}$ and $\psi({\bf h}_0) = \frac{1}{e}$, $\psi({\bf h}_1) = 0$. Then ${\sf KZ}_{\psi}(L_{\psi}(E^{\lambda}))\neq 0$ if and only if  $\lambda$ is an Uglov $\ell$-partition with respect to $(e\,;\,s_0, \ldots ,s_{\ell-1})$.
\end{corollary}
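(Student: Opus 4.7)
The plan is to deduce the corollary from the preceding Theorem \ref{DGGJ}, Proposition \ref{can bas set} and Jacon's theorem \cite[Main Theorem]{jaca}.

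First I would explain how the machinery built up in the paper produces a canonical basic set with respect to $a_{{\bf t}}$. By Theorem \ref{DGGJ}, whenever $[\Delta_{\psi}(E^\lambda):L_{\psi}(E^{\lambda'})] \neq 0$ we have $\lambda=\lambda'$ or $a_{\bf t}(\lambda)>a_{\bf t}(\lambda')$. Combining this with Proposition \ref{can bas set} (applied to the function $a_{\bf t}$ in place of $c$), one obtains that the set
\[
{\bf B}_{{\bf t}} := \{ E^\lambda \in \irr{W} : {\sf KZ}_{\psi}(L_{\psi}(E^\lambda)) \neq 0 \}
\]
is a basic set for $\mathcal{H}_\theta$ with respect to $a_{\bf t}$ in the sense of \S\ref{Basic Sets}. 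This is exactly the content of the paragraph immediately following Theorem \ref{DGGJ}.

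Second, I would invoke uniqueness of basic sets, stated at the end of \S\ref{Basic Sets}. Any two basic sets with respect to the same function $a_{\bf t}$ must coincide, so it suffices to exhibit the set of Uglov $\ell$-partitions as a basic set with respect to $a_{\bf t}$. This is precisely Jacon's theorem \cite[Main Theorem]{jaca} (whose input is Ariki's categorification theorem), which asserts that the canonical basic set for the Ariki-Koike algebra $\mathcal{H}_\theta$ with respect to the $a$-function $a_{\bf t}$ (for the choice ${\bf u}$ with $u_j = je/\ell$, discussed in \S\ref{gloss}) is the set of Uglov $\ell$-partitions with respect to $(e; s_0, \ldots, s_{\ell-1})$.

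Therefore ${\bf B}_{\bf t}$ equals the set of Uglov $\ell$-partitions, which is exactly the statement of the corollary. The only potential subtlety is making sure that the specific $a_{\bf t}$-function used in \cite{jaca} agrees with ours on the nose, but this was already observed in \S\ref{gloss}; once matched there is nothing further to verify. The whole argument is formal modulo the two substantial inputs (Theorem \ref{DGGJ} and Jacon's theorem), so I would expect no essential obstacle.
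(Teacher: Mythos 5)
Your proposal is correct and follows exactly the route the paper intends: Theorem \ref{DGGJ} together with Proposition \ref{can bas set} shows that $\{E^\lambda : {\sf KZ}_{\psi}(L_{\psi}(E^\lambda))\neq 0\}$ is a basic set with respect to $a_{\bf t}$, and uniqueness of basic sets combined with Jacon's identification (via Ariki's theorem) of the canonical basic set as the Uglov $\ell$-partitions for the choice $u_j=je/\ell$ gives the corollary. Nothing further is needed.
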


\subsection{} \label{guglov} Following the same reasoning as \cite[Theorem 3.1]{GeckJa} in the $\ell = 2$ case and using the Morita equivalences in \cite[Theorem 1.1]{DipMat}, we expect Corollary \ref{uglov} generalises to the case where  
$\psi({\bf h}_0) = \frac{k}{e}$ for $k \in  \mathbb{Z}_{>0}$. This, together with \cite[Proposition 2.5]{ChlJa}, would allow us to generalise the results of Genet and Jacon, \cite{GenJa}, to obtain canonical basic sets for the cyclotomic Hecke algebras associated with $G(\ell,p,n)$, in the cases where Clifford theory works: when $n>2$ or $n=2$ and $p$ is odd.

\bibliographystyle{plain}

\bibliography{canonical}

\end{document}